\newtheorem{theorem}{Theorem}
\newtheorem{lemma}[theorem]{Lemma} % usklajeno z AJCM
\newtheorem{corollary}{Corollary}
\providecommand{\abs}[1]{\lvert#1\rvert}
\providecommand{\sgn}{\ensuremath{\mathrm{sgn}} }
\newcommand\blfootnote[1]{%
  \begingroup
  \renewcommand{\@makefntext}[1]{\noindent#1}
  \renewcommand\thefootnote{}\footnote{#1}%
  \addtocounter{footnote}{-1}%
  \endgroup
}
\author{Janez Komelj~\orcidlink{0009-0002-3474-2654}\\ Vnanje Gorice, Slovenia\\ {\footnotesize E-mail: \email{jkomelj@siol.net}}}
\title{The Bivariate Normal Integral via Owen's $T$ Function as a Modified Euler's Arctangent Series}
\email{jkomelj@siol.net}
\begin{document}

% V AJCM je leta 2017 objavil svoj članek tudi M. Vali Siadat - poišči na Wikipediji.
% Glej še Dan Gabriel Cacuci. Objavil je dva članka v isti številki revije kot jaz,
% vendar ne v posebni izdaji Computational Mathematics and Its Applications.

\blfootnote{\fbox{
\begin{minipage}{0.97\textwidth}
This is an electronic reprint of the original article published in American Journal of Computational Mathematics, 2023, Vol. 13, No. 4, 476--504, \doi{10.4236/ajcm.2023.134026}. This reprint differs from the original in reference style, pagination, and typographic details. In this version, the equation \eqref{Owen6c} was corrected ($\arctan r$ changed to $\arctan\frac{1}{r}$). In Recursions 1 and 2, all indices in the last two steps were fixed (decremented by 1) and the corresponding explanatory text was adjusted. \pkg{Phi2rho} and results unchanged.
\end{minipage}
}}

\section{Introduction}

Let $\Phi(x)=\frac{1}{\sqrt{2\pi}}\int_{-\infty}^x e^{-\frac{1}{2}t^2}\,\mathrm{d}t$ be the standard normal cumulative distribution function (cdf) and $\varphi(x)=\frac{1}{\sqrt{2\pi}}\;e^{-\frac{1}{2}x^2}$ the standard normal probability density function (pdf). The bivariate standard normal cdf
\begin{equation}\label{Phirxy1}
   \Phi_\varrho^2(x,y)=\tfrac{1}{2\pi\,\sqrt{1-\varrho^2}}\;\int_{-\infty}^x \int_{-\infty}^y e^{-\frac{s^2-2\varrho st+t^2}{2\,(1-\varrho^2)}}\,\mathrm{d}s\;\mathrm{d}t
\end{equation}
with a correlation coefficient $\varrho\in(-1,1)$ has the bivariate standard normal pdf
\begin{equation}\label{phirxy}
   \varphi_\varrho^2(x,y)=\tfrac{\partial^2\Phi_\varrho^2(x,y)}{\partial x\,\partial y}= \tfrac{1}{2\pi\,\sqrt{1-\varrho^2}}\;e^{-\frac{x^2-2\varrho xy+y^2}{2\,(1-\varrho^2)}}.
\end{equation}
If $\varrho=0$, then $\Phi_0^2(x,y)=\Phi(x)\,\Phi(y)$, while for $\abs{\varrho}=1$ we have the degenerated cases
% \citep[p.~936, 26.3.15--26.3.18]{Abramowitz1972}.
\begin{equation}\label{Phi1xy}
   \Phi_{\pm 1}^2(x,y)=
   \begin{cases}
      \min\{\Phi(x),\Phi(y)\},     & \mbox{if }\varrho=+1,\\
      \max\{\Phi(x)+\Phi(y)-1,0\}, & \mbox{if }\varrho=-1.
   \end{cases}
\end{equation}
Additionally, for $\varrho\in(-1,1)$ define
\begin{equation*}
   L(x,y,\varrho)=\tfrac{1}{2\pi\,\sqrt{1-\varrho^2}}\,\int_x^{\infty}\int_y^{\infty} e^{-\frac{s^2-2\varrho st+t^2}{2\,(1-\varrho^2)}}\,\mathrm{d}s\;\mathrm{d}t,
\end{equation*}
$L(x,y,-1)=\max\{1-\Phi(x)-\Phi(y),0\}$ and $L(x,y,1)=1-\max\{\Phi(x),\Phi(y)\}$. The functions $\Phi_\varrho^2(x,y)$ and $L(x,y,\varrho)$ are symmetric regarding $x$ and $y$, and because they are complementary, it suffices to analyze only one of them. In this respect, $L$ is often preferred, but not in this paper where some equations for $L$ are also rewritten using $\Phi_\varrho^2(x,y)=L(-x,-y,\varrho)$ and some other auxiliary equations.
%\medskip

Noting that \eqref{Phi1xy} implies $\Phi_1^2(x,x)=\Phi(x)=2\,\Phi_0^2(x,0)$ and $\Phi_{-1}^2(x,-x)=0=\Phi_0^2(x,0)+\Phi_0^2(-x,0)-\frac{1}{2}$, the equations from \citep[p.~937, 26.3.19 and 26.3.20]{Abramowitz1972}, which do not cover these cases if $x\ne 0$, as we get an indefinite expression $\frac{0}{0}$ for the auxiliary correlation coefficients $\varrho_x$ and $\varrho_y$, can be transformed into
\begin{equation}\label{Phirxy2}
   \Phi_\varrho^2(x,y)=
   \begin{cases}
      \tfrac{1}{4}+\tfrac{1}{2\pi}\arcsin\varrho,            & \mbox{if }x=y=0,\\
      \Phi_{\varrho_x}^2(x,0)+\Phi_{\varrho_y}^2(y,0)-\beta, & \mbox{if }x\ne 0\mbox{ or }y\ne 0,
   \end{cases}
\end{equation}
where $\beta=0$ if $xy>0$ or $xy=0$ and $x+y\ge 0$, and $\beta=\frac{1}{2}$ otherwise,
\begin{equation}\label{Phirxy3}
   \varrho_x=\left\{
   \begin{array}{lcr}
      0 & \mbox{if }\abs{\varrho}=1\mbox{ and }x\ne 0\mbox{ and }x-y\,\sgn(\varrho)=0 & 0\\
      \tfrac{(\varrho x-y)\,\sgn(x)}{\sqrt{x^2-2\varrho xy+y^2}} & \mbox{otherwise} &
      \tfrac{(\varrho y-x)\,\sgn(y)}{\sqrt{x^2-2\varrho xy+y^2}}
   \end{array}
   \right\}=\varrho_y,
\end{equation}
where $\sgn(x)=1$, if $x\ge 0$ and $\sgn(x)=-1$, if $x<0$. Note that although $\abs{\varrho}$ is not close to $1$, $\abs{\varrho_x}$ or $\abs{\varrho_y}$ can be, and that $\abs{\varrho}=1$ and the lower equations \eqref{Phirxy3} imply $\abs{\varrho_x}=1$ and $\abs{\varrho_y}=1$.
%\medskip

Since \citep[p.~936, 26.3.9 and 26.3.8]{Abramowitz1972} imply
\begin{equation}\label{Phirx0}
   \Phi_\varrho^2(-x,0)=\Phi_\varrho^2(x,0)-\Phi(x)+\tfrac{1}{2}\quad\text{and}\quad
   \Phi_{-\varrho}^2(x,0)=\Phi(x)-\Phi_\varrho^2(x,0),
\end{equation}
and $\Phi_{-\varrho}^2(-x,0)=\frac{1}{2}-\Phi_\varrho^2(x,0)$, the problem how to compute $\Phi_\varrho^2(x,y)$ is reduced to the computation of $\Phi_\varrho^2(x,0)$ for $x\ge 0$ and $\varrho\in[0,1)$.
%\medskip

Extensive older bibliography exists about how $\Phi_\varrho^2(x,y)$ can be computed \citep[see][]{Gupta1963a}. Many old methods, e.g.\ \citeauthor{Donnelly1973}'s (\citeyear{Donnelly1973}) method based on \citeauthor{Owen1956}'s (\citeyear{Owen1956}) results, \citeauthor{Divgi1979}'s (\citeyear{Divgi1979}) method and the method of \citet{Drezner1990}, are sufficiently effective and accurate for the majority of present needs, but new methods still appear, e.g.\ \citep{Patefield2000,Genz2004,Meyer2013b,Fayed2014b}. More details and comparisons of some methods can be found in \citep{Terza1991,Gai2001,Agca2003}.
%\medskip

The Owen's $T$ function is defined by \citep[pp.~1078--1079]{Owen1956}
\begin{equation}\label{Owen1}
   \begin{split}
      T(h,a)&=\tfrac{1}{2\pi}\int_0^a\tfrac{e^{-\frac{1}{2}h^2(1+x^2)}}{1+x^2}\, \mathrm{d}x\\
      &=\tfrac{\arctan a}{2\pi}-\tfrac{1}{2\pi}\sum_{k=0}^\infty\tfrac{(-1)^k a^{2k+1}}{2k+1}\left(1-e^{-\frac{1}{2}h^2}\sum_{i=0}^k\tfrac{h^{2i}}{2^i i!}\right)
   \end{split}
\end{equation}
for $a,h\in\mathbb{R}$. It appears frequently in the extensive Owen's table of integrals involving Gaussian functions \citep[pp.~393--413]{Owen1980} and can be used for computing $\Phi_\varrho^2(x,y)$, as will be explained in the next section. The series \eqref{Owen1} is used in \citeauthor{Donnelly1973}'s (\citeyear{Donnelly1973}) algorithm and, based on it, is also realized in \code{pbinorm()} function from the \pkg{VGAM} package \citep{Yee2023}. Since it alternates, there are problems with numerical stability, e.g.\ using double precision arithmetic, it may provide only single precision results. More about Owen's $T$ function and older ways of calculating it are given by \citet{Patefield2000}, who developed a hybrid method to compute it. To achieve double precision accuracy, they use the series \eqref{Owen1} or one of five other methods. Which method is chosen and which its parameters are used depends on which of the $18$ regions $(h,a)$ belongs to \citep[see][p.~7, Fig.~2]{Patefield2000}. Their method is also implemented in \code{OwenT()} function from the \pkg{OwenQ} package \citep{Laurent2023}, hereafter \code{OwenQ::OwenT()}.
%\medskip

The method presented in this paper extends the theoretical knowledge and can be easily implemented. Its great advantage over the already mentioned methods, which enable $\Phi_\varrho^2(x,y)$ computation via Owen's $T$ function, is its simplicity, faster convergence and ensured numerical stability. Regarding other methods that have proven to be good or even excellent in terms of $\Phi_\varrho^2(x,y)$ computation speed and accuracy, e.g.\ the one implemented in the \code{pmvnorm()} function from the \pkg{mvtnorm} package \citep{Genz2009}, the main advantages are simplicity and independence from various constants. Both enable easy portability between different environments and using high-precision arithmetic. There is no need to adjust the number and precision of various constants, such as weights and knots in quadrature formulas, which are used in many $\Phi_\varrho^2(x,y)$ computational methods.
%\medskip

In Section \ref{sec:Background}, the background results are presented, which are the starting point for the development of a new method, as well as for the acceleration of the tetrachoric series, which was used for the benchmark values computation. In Section \ref{sec:Auxiliary}, auxiliary results are derived. In core Section \ref{sec:Main}, new series for $\Phi_\varrho^2(x,0)$ are derived in Theorems \ref{Theorem2} and \ref{Theorem3}, and in Corollary \ref{Corollary2} they are combined in a unified series for $\Phi_\varrho^2(x,y)$. However, in all cases, the series must be supplemented by the standard normal cdf and some of them also by the arctangent function computation. Four series for computing the Owen's $T$ function are presented in Theorem \ref{Theorem4}. Among them \eqref{Owen6b} can be viewed as a modified Euler's arctangent series. In Sections \ref{sec:Recursion} and \ref{sec:Results}, algorithmic and numeric issues are discussed, and testing results are presented.
%\medskip

Using double precision arithmetic and testing on a random sample of one million triplets $(x,y,\varrho)$ with uniformly distributed components, the maximum absolute error was $3.45\cdot 10^{-16}$, which is approximately $3.11$-times the machine epsilon $2^{-53}$. On the same random sample, but with $\varrho$ transformed so that more than half of $\abs{\varrho}$ were greater than $0.9999$, and by equivalent computing of two function values instead of $\Phi_\varrho^2(x,y)$ in rare cases with $\varphi_\varrho^2(x,y)>1$, the maximum absolute error was even slightly smaller.
%\medskip

All new series are suitable for vector and high-precision computation. The fastest one asymptotically converges as $\sum_k\frac{q^{k+1}e^{-q}}{(k+1)!}\,\frac{\min\{a^{2k},1\}}{\sqrt{k+1}\;(1+a^2)^k}$, where $q=\frac{1}{2}(1+a^2)h^2$. Using the \pkg{Rmpfr} package \citep{Maechler2023} and $1024$-bit precision computation, $199$ iterations were needed to compute $\Phi_{\sqrt{2}/2}^2(2.1,0)$, achieving an absolute error smaller than $2^{-1024}\approx 5.56\cdot 10^{-309}$.

\section{Background results}\label{sec:Background}

The bivariate standard normal cdf can be expanded into tetrachoric series \citetext{\citealp[p.~196]{Kendall1941}, \citealp[p.~940, 26.3.29]{Abramowitz1972}}
\begin{equation}\label{TetrachA}
   \Phi_\varrho^2(x,y)=\Phi(x)\,\Phi(y)+\varphi(x)\,\varphi(y)
   \sum_{k=0}^\infty H\!e_k(x)\,H\!e_k(y)\,\tfrac{\varrho^{k+1}}{(k+1)!},
\end{equation}
where $H\!e_k(x)$ are the ``probabilist's'' Hermite polynomials defined by
\begin{equation*}
   H\!e_k(x)=(-1)^k\,e^{\frac{1}{2}x^2}\,\tfrac{\mathrm{d}^k}
   {\mathrm{d}x^k}\;e^{-\frac{1}{2}x^2}\quad(k\in\mathbb{N}_0).
\end{equation*}
The tetrachoric series \eqref{TetrachA} converges a bit faster than a geometric series with a quotient $\varrho$ and slowly converges for $\abs{\varrho}$ close to or equal to $1$.
%\medskip

Let $\mathbb{N}_{-1}=\mathbb{N}\cup\{-1,0\}$. Extending the usual double factorial function definition for $k\in\mathbb{N}_0$ to $k\in\mathbb{N}_{-1}$ with $(-1)!!=1$, we have
\begin{equation*}
   H\!e_k(0)=
   \begin{cases}
      0,                       & \mbox{if $k$ is odd},\\
      (-1)^\frac{k}{2}(k-1)!!, & \mbox{if $k$ is even}.
   \end{cases}
\end{equation*}
Setting $y=0$ in \eqref{TetrachA}, transforming the summation index $\frac{k}{2}\mapsto k$, and using $(2k-1)!!=\frac{(2k)!}{2^k k!}$ for $k\in\mathbb{N}_0$, we get a slightly faster converging series
\begin{equation}\label{TetrachB}
   \Phi_\varrho^2(x,0)=\tfrac{1}{2}\,\Phi(x)+\tfrac{\varphi(x)}{\sqrt{2\pi}}
   \sum_{k=0}^\infty H\!e_{2k}(x)\,\tfrac{(-1)^k\varrho^{2k+1}}{2^k k!\,(2k+1)}.
\end{equation}
\citet[p.~7]{Vasicek1998a} found an alternative series for the computation of $\Phi_\varrho^2(x,0)$, which is a good alternative to the series \eqref{TetrachB} if $\varrho^2>\frac{1}{2}$.
%\medskip

Integrating the well known equation \citep[p.~352]{Plackett1954}
\begin{equation}\label{Phirxy4}
   \tfrac{\partial}{\partial\varrho}\,\Phi_\varrho^2(x,y)=
   \tfrac{\partial^2\Phi_\varrho^2(x,y)}{\partial x\,\partial y}=\varphi_\varrho^2(x,y)
\end{equation}
with respect to $\varrho$, using equations $\Phi_0^2(x,y)=\Phi(x)\,\Phi(y)$ and \eqref{Phi1xy}, we get
\begin{equation}\label{Phirxy5}
   \begin{split}
      \Phi_\varrho^2(x,y)&=\Phi(x)\,\Phi(y)+\int_0^\varrho\varphi_s^2(x,y)\, \mathrm{d}s\\
      &=\min\{\Phi(x),\Phi(y)\}-\int_\varrho^1\varphi_s^2(x,y)\,\mathrm{d}s\\
      &=\max\{\Phi(x)+\Phi(y)-1,0\}+\int_{-1}^\varrho\varphi_s^2(x,y)\,\mathrm{d}s.
   \end{split}
\end{equation}
Due to historical reasons, the variates $h$ and $k$ are often used instead of $x$ and $y$. We will use $h$ instead of $x$ but will not need $k$ in the role of $y$.
%\medskip

In the sequel, let $\varrho\in(-1,1)$ and $r\in\mathbb{R}$ be permanently connected by the one-to-one correspondence $r=\frac{\varrho}{\sqrt{1-\varrho^2}}$ and $\varrho=\frac{r}{\sqrt{1+r^2}}$. Notice that \eqref{Phirxy2} and $\arcsin\varrho=\arctan r$ imply $\Phi_\varrho^2(0,0)=\frac{1}{4}+\frac{1}{2\pi}\arctan r$. Setting $x=h$, $y=0$, and writing $e^{-\frac{1}{2}h^2}\int_a^b e^{\frac{1}{2}h^2}\varphi_s^2(h,0)\,\mathrm{d}s$ instead of $\int_a^b\varphi_s^2(h,0)\,\mathrm{d}s$, the equations \eqref{Phirxy5} can be rewritten to
\begin{equation}\label{Phirh0a}
   \begin{split}
      \Phi_\varrho^2(h,0)
      &=\tfrac{1}{2}\,\Phi(h)+\varphi(h)\,T_r(-\tfrac{1}{2}h^2)\\
      &=\min\{\Phi(h),\tfrac{1}{2}\}-\varphi(h)\,\widetilde{T}_r(-\tfrac{1}{2}h^2)\\
      &=\max\{\Phi(h)-\tfrac{1}{2},0\}+\varphi(h)\,\overline{T}_r(-\tfrac{1}{2}h^2),
   \end{split}
\end{equation}
where, using a substitution $s=\frac{x}{\sqrt{1+x^2}}$,
\begin{equation*}
   T_r(w)=\tfrac{1}{\sqrt{2\pi}}\int_0^\varrho\tfrac{1}{\sqrt{1-s^2}}\;
   e^{\frac{ws^2}{1-s^2}}\,\mathrm{d}s=\tfrac{1}{\sqrt{2\pi}} \int_0^r\tfrac{e^{wx^2}}{1+x^2}\,\mathrm{d}x\quad(w\in\mathbb{R}).
\end{equation*}
Analogously, $\widetilde{T}_r(w)$ and $\overline{T}_r(w)$ are defined for $w\le 0$ by integrals with limits $(\varrho,1)$ and $(-1,\varrho)$, and $(r,\infty)$ and $(-\infty,r)$ respectively, which do not exist for $w>0$. The computation of $\Phi_\varrho^2(h,0)$ is reduced to the computation of one of the integrals $T_r(-\frac{1}{2}h^2)$, $\widetilde{T}_r(-\frac{1}{2}h^2)$ and $\overline{T}_r(-\frac{1}{2}h^2)$.
%\medskip

The special cases \eqref{Phirh0a} of the equations \eqref{Phirxy5} in combination with \eqref{Phirxy2} are a starting point for some computation methods for $\Phi_\varrho^2(x,y)$ which are based on the integration. Owen's is one of them -- note that $\varphi(h)\,T_a(-\frac{1}{2}h^2)=T(h,a)$. Using $r$ instead of $a$ in the sequel and resolving the problem with $\frac{0}{0}$ as before, Owen's equation \citep[p.~416, 3.1]{Owen1980} can be rewritten as
\begin{equation}\label{Owen2}
   \Phi_\varrho^2(x,y)=
   \begin{cases}
      \tfrac{1}{4}+\tfrac{1}{2\pi}\arcsin\varrho,            & \mbox{if }x=y=0,\\
      \tfrac{1}{2}(\Phi(x)+\Phi(y))-T(x,r_x)-T(y,r_y)-\beta, & \mbox{if }x\ne 0\mbox{ or }y\ne 0,
   \end{cases}
\end{equation}
where $\beta=0$ if $xy>0$ or $xy=0$ and $x+y\ge 0$, and $\beta=\frac{1}{2}$ otherwise, and
\begin{equation}\label{Owen3}
   r_x=\left\{
   \begin{array}{lcr}
      0 & \mbox{if }\abs{\varrho}=1\mbox{ and }x\ne 0\mbox{ and }x-y\,\sgn(\varrho)=0 & 0\\
      \tfrac{y-\varrho x}{x\,\sqrt{1-\varrho^2}} & \mbox{otherwise} &
      \frac{x-\varrho y}{y\,\sqrt{1-\varrho^2}}
   \end{array}
   \right\}=r_y.
\end{equation}
Another Owen's important equation is \citep[p.~414, 2.7]{Owen1980}
% See also \citep[p.~1076, 2.3]{Owen1956}, where the equation is only for $h>0$.
\begin{equation}\label{Owen4}
   T(h,r)+T(rh,\tfrac{1}{r})=\tfrac{1}{2}(\Phi(h)+\Phi(rh))-\Phi(h)\,\Phi(rh)-\beta,
\end{equation}
where $\beta=0$ if $r\ge 0$ and $\beta=\frac{1}{2}$ if $r<0$. It implies $\varphi(h)\,T_1(-\frac{1}{2}h^2)=T(h,1)=\frac{1}{2}\Phi(h)\,(1-\Phi(h))$.
% See also \citep[p.~414, 2.3]{Owen1980}.
If $\varrho=\frac{\sqrt{2}}{2}$, then $r=1$. Using \eqref{Phirh0a} and the right equation \eqref{Phirx0}, we get
\begin{equation}\label{Phirh0b}
   \Phi_{\sqrt{2}/2}^2(h,0)=\Phi(h)(1-\tfrac{1}{2}\,\Phi(h))\quad\text{and}\quad \Phi_{-\sqrt{2}/2}^2(h,0)=\tfrac{1}{2}(\Phi(h))^2.
\end{equation}
Let $\varrho\in(-1,1)$, $\overline{\varrho}=\sgn(\varrho)\sqrt{1-\varrho^2}$,
% Attention, the definition of $\sgn(0)$ is important here.
$r=\frac{\varrho}{\sqrt{1-\varrho^2}}$, $\overline{r}=\frac{\overline{\varrho}}{\sqrt{1-\overline{\varrho}^2}}=\frac{1}{r}$ and $h\in\mathbb{R}$. By adding $\frac{1}{2}(\Phi(h)+\Phi(rh))$ on both sides of \eqref{Owen4} and using $\varphi(h)\,T_r(-\frac{1}{2}h^2)=T(h,r)$ and \eqref{Phirh0a}, it follows
\begin{equation}\label{Owen5}
   \Phi_\varrho^2(h,0)+\,\Phi_{\overline{\varrho}}^2(rh,0)=\Phi(h)+\Phi(rh)- \Phi(h)\,\Phi(rh)-\beta,
\end{equation}
where $\beta=0$ if $r\ge 0$ and $\beta=\frac{1}{2}$ if $r<0$.
%\medskip

If $\abs{\varrho}>\frac{\sqrt{2}}{2}$, then $\abs{r}>1$, $\abs{\overline{r}}<1$ and $\abs{\overline{\varrho}}<\frac{\sqrt{2}}{2}$. $\Phi_\varrho^2(h,0)$ can be computed by computing $\Phi_{\overline{\varrho}}^2(rh,0)$ and using \eqref{Owen5}. Consequently, the right equation \eqref{Phirx0} and the transformation of the parameters $rh\mapsto\overline{h}$ and $\frac{1}{r}\mapsto\overline{r}$, if needed, reduce the condition $\varrho\in(-1,1)$ to $\varrho\in[0,\frac{\sqrt{2}}{2}]$ with a corresponding $r\in[0,1]$. The problem how to compute $\Phi_\varrho^2(x,y)$ is reduced to the computation of $\Phi_\varrho^2(h,0)$ for $h\ge 0$ and $\varrho\in[0,\frac{\sqrt{2}}{2}]$.
%\medskip

The theorems in the sequel do not depend on the restriction to $\varrho^2\le\frac{1}{2}$ with a corresponding $\abs{r}\le 1$ found in \citep{Patefield2000}. However, if $\abs{r}>1$, the transformation of $rh\mapsto\overline{h}$ and $\frac{1}{r}\mapsto\overline{r}$ speeds up convergence significantly. This is, for example, also the case for the Vasicek's series and the tetrachoric series \eqref{TetrachB}, which transforms into
\begin{equation}\label{TetrachC}
   \Phi_\varrho^2(h,0)=\Phi(h)+\tfrac{1}{2}\Phi(rh)-\Phi(h)\Phi(rh)
   -\tfrac{\varphi(rh)}{\sqrt{2\pi}}\sum_{k=0}^\infty H\!e_{2k}(rh)\, \tfrac{(-1)^k\overline{\varrho}^{2k+1}}{2^k k!\,(2k+1)}-\beta,
\end{equation}
where $\overline{\varrho}=\sgn(\varrho)\sqrt{1-\varrho^2}$, $\beta=0$ if $r\ge 0$ and $\beta=\frac{1}{2}$ if $r<0$. Since the results calculated with the Vasicek's series in Section \ref{sec:Results} only supplement benchmark values calculated with the tetrachoric series, details about it are not given here.

\section{Auxiliary results}\label{sec:Auxiliary}

This paper pays attention to the estimation of the method error due to truncation of infinite series, while the analysis of rounding errors is out of its scope. The following lemma is intended for this. In it and all the theorems below, the phrase ``truncated after the $n^{th}$ term'' also means omitting the entire series. In these cases, set $n=-1$ and assume the empty sum is $0$.
%\medskip

\begin{lemma}
If the Euler's series
\begin{equation}\label{Euler1}
   \arctan r=\tfrac{r}{1+r^2}\sum_{k=0}^\infty\tfrac{(2k)!!}{(2k+1)!!} \left(\tfrac{r^2}{1+r^2}\right)^k\quad(r\in\mathbb{R})
\end{equation}
% \citep[p.~81, 4.4.42]{Abramowitz1972}
% \citetext{\citealp[p.~81, 4.4.42]{Abramowitz1972}, or \citealp[p.~121, 4.24.5]{Olver2010}}
is truncated after the $n^{th}$ term, then the remainder
\begin{equation*}
   R_n=\tfrac{r}{1+r^2}\sum_{k=n+1}^\infty\tfrac{(2k)!!}{(2k+1)!!} \left(\tfrac{r^2}{1+r^2}\right)^k \quad(n\in\mathbb{N}_{-1})
\end{equation*}
has the same sign as $r$ and is bounded in absolute value by
\begin{equation}\label{Euler2}
   B_n\le\left\lvert R_n\right\rvert\le(1+r^2)\,B_n,\quad\text{where}\quad B_n=\tfrac{(2n+2)!!}{(2n+3)!!}\,\tfrac{\abs{r}}{1+r^2} \left(\tfrac{r^2}{1+r^2}\right)^{n+1}.
\end{equation}
\end{lemma}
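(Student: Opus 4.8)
The plan is to work directly with the remainder series and treat the three assertions (sign, lower bound, upper bound) in turn, exploiting only that every summand is nonnegative and that the coefficients $\frac{(2k)!!}{(2k+1)!!}$ decrease. First I would note that for every $k\in\mathbb{N}_0$ the factor $\frac{(2k)!!}{(2k+1)!!}$ is positive and $\bigl(\tfrac{r^2}{1+r^2}\bigr)^k\ge 0$, so the sum $\sum_{k=n+1}^\infty\frac{(2k)!!}{(2k+1)!!}\bigl(\tfrac{r^2}{1+r^2}\bigr)^k$ is a nonnegative real number. Hence $R_n$ carries the sign of its prefactor $\tfrac{r}{1+r^2}$, i.e.\ the sign of $r$ (with $R_n=0$ when $r=0$), which settles the sign claim and justifies passing to absolute values. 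Writing $u=\tfrac{r^2}{1+r^2}\in[0,1)$, one has $\abs{R_n}=\tfrac{\abs{r}}{1+r^2}\sum_{k=n+1}^\infty\frac{(2k)!!}{(2k+1)!!}\,u^k$, a series of nonnegative terms.

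For the lower bound in \eqref{Euler2} I would simply retain the leading term $k=n+1$ and discard the (nonnegative) tail: that term equals $B_n$ by the very definition of $B_n$, so $\abs{R_n}\ge B_n$.

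For the upper bound, the one ingredient worth isolating is the monotonicity of $k\mapsto\frac{(2k)!!}{(2k+1)!!}$: from $\frac{(2k+2)!!}{(2k+3)!!}=\frac{2k+2}{2k+3}\cdot\frac{(2k)!!}{(2k+1)!!}$ and $\frac{2k+2}{2k+3}<1$ the sequence is strictly decreasing, so for every $k\ge n+1$ we have $\frac{(2k)!!}{(2k+1)!!}\le\frac{(2n+2)!!}{(2n+3)!!}$. Pulling this constant out of the sum and summing the geometric majorant gives $\abs{R_n}\le\tfrac{\abs{r}}{1+r^2}\cdot\frac{(2n+2)!!}{(2n+3)!!}\sum_{k=n+1}^\infty u^k=\tfrac{\abs{r}}{1+r^2}\cdot\frac{(2n+2)!!}{(2n+3)!!}\cdot\frac{u^{n+1}}{1-u}$. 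Finally, since $1-u=\frac{1}{1+r^2}$, the factor $\frac{u^{n+1}}{1-u}$ equals $(1+r^2)u^{n+1}$, and the bound collapses to exactly $(1+r^2)B_n$, as claimed.

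I do not expect a genuine obstacle here; the only points needing a word of care are that the geometric-series step requires $u<1$, which holds for all real $r$ because $\frac{r^2}{1+r^2}<1$, and that the degenerate case $n=-1$ (empty initial sum, so $R_{-1}=\arctan r$) and the case $r=0$ are automatically covered by the same computation under the conventions already adopted in the excerpt.
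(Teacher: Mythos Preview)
Your argument is correct and, for the lemma as stated, more direct than the paper's. You handle the sign and the lower bound exactly as one would expect, and your upper bound via the monotonicity of $\frac{(2k)!!}{(2k+1)!!}$ together with a geometric majorant is clean and fully rigorous.

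The paper proceeds differently: it invokes the integral representation $\frac{(2k)!!}{(2k+1)!!}=\int_0^1(1-x^2)^k\,\mathrm{d}x$, introduces an auxiliary parameter $\alpha\in(0,1]$, swaps sum and integral (Tonelli), and obtains the closed form
\[
S(r,\alpha)=\sum_{k=n+1}^\infty\tfrac{(2k)!!}{(2k+1)!!}\Bigl(\tfrac{\alpha r^2}{1+r^2}\Bigr)^k=(1+r^2)\Bigl(\tfrac{\alpha r^2}{1+r^2}\Bigr)^{n+1}\int_0^1\tfrac{(1-x^2)^{n+1}}{1+(1-\alpha)r^2+\alpha r^2x^2}\,\mathrm{d}x,
\]
then bounds the integrand's denominator at $x=0$ and $x=1$. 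Setting $\alpha=1$ recovers the lemma. The payoff of this longer route is that the intermediate identities and the $\alpha$-dependent two-sided estimate are reused verbatim in the proof of Theorem~\ref{Theorem2}, where $\alpha=1-e^{-q}$ arises from Alzer's inequality for $P(k+1,q)$ and yields the sharper remainder bound \eqref{Eq:Theo2c}. Your approach buys simplicity and self-containment for the lemma itself, but would not immediately supply that later refinement.
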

%\medskip

\begin{proof}
% Let $\alpha\in(0,1]$ ensures convergence and correct orientation of the parabola.
Let $\alpha\in(0,1]$. Noting that $k\ge n+1\ge 0$ and $\int_0^1(1-x^2)^k\,\mathrm{d}x= \int_0^\frac{\pi}{2}\cos^{2k+1}t\,\mathrm{d}t=\frac{(2k)!!}{(2k+1)!!}$ for $k\in\mathbb{N}_0$ \citep[p.~397, 3.621, 4.]{Gradshteyn2015}, and using Tonelli's theorem, it follows
\begin{equation}\label{Euler3}
   \begin{split}
      S(r,\alpha)&=\sum_{k=n+1}^\infty\tfrac{(2k)!!}{(2k+1)!!} \left(\tfrac{\alpha\,r^2}{1+r^2}\right)^k=
      \sum_{k=n+1}^\infty\left(\tfrac{\alpha\,r^2}{1+r^2}\right)^k \int_0^1(1-x^2)^k\,\mathrm{d}x\\
      &=\int_0^1\sum_{k=n+1}^\infty\left(\tfrac{\alpha\,r^2\,(1-x^2)}{1+r^2} \right)^k\mathrm{d}x
      =(1+r^2)\left(\tfrac{\alpha\,r^2}{1+r^2}\right)^{n+1} \int_0^1\tfrac{(1-x^2)^{n+1}\,\mathrm{d}x} {1+(1-\alpha)\,r^2+\alpha\,r^2x^2}.
   \end{split}
\end{equation}
The denominator in the last integrand is an increasing function of $x$. Inserting $1$ and $0$ instead of $x$ implies
\begin{equation}\label{Euler4}
   \tfrac{(2n+2)!!}{(2n+3)!!}\,\tfrac{\alpha\,\abs{r}}{1+r^2} \left(\tfrac{\alpha\,r^2}{1+r^2}\right)^{n+1}\le \left\lvert\tfrac{\alpha\,r}{1+r^2}\,S(r,\alpha)\right\rvert\le \tfrac{(2n+2)!!}{(2n+3)!!}\,\tfrac{\alpha\,\abs{r}}{1+(1-\alpha)\,r^2} \left(\tfrac{\alpha\,r^2}{1+r^2}\right)^{n+1}
\end{equation}
and, together with $B_n=\frac{r}{1+r^2}\,S(r,1)$, \eqref{Euler2}.
\end{proof}
%\medskip

If $r\ne 0$, $\arctan r=\sgn(r)\,\frac{\pi}{2}-\arctan\frac{1}{r}$, \eqref{Euler1} with $\frac{1}{r}$ instead of $r$, and $r=\sgn(r)\abs{r}$ imply
% For $r=0$, the definition of $\sgn(0)$ would be important here.
\begin{equation*}
   \arctan r=\sgn(r)\left(\tfrac{\pi}{2}-\tfrac{\abs{r}}{1+r^2} \sum_{k=0}^\infty\tfrac{(2k)!!}{(2k+1)!!}\,\tfrac{1}{(1+r^2)^k}\right)
\end{equation*}
and \eqref{Euler2} with $B_n=\frac{(2n+2)!!}{(2n+3)!!}\,\frac{\abs{r}}{(1+r^2)^{n+2}}$. This series for $\abs{r}>1$ converges faster than \eqref{Euler1}, so we can already sense some kind of similarity with \eqref{Owen4}.
%\medskip

In the sequel, the lower and upper regularized gamma functions $P(a,x)=\frac{\gamma(a,x)}{\Gamma(a)}$ and $Q(a,x)=\frac{\Gamma(a,x)}{\Gamma(a)}$ respectively, where $\gamma(a,x)=\int_0^x t^{a-1}e^{-t}\,\mathrm{d}t$ and $\Gamma(a,x)=\int_x^\infty t^{a-1}e^{-t}\,\mathrm{d}t$ are the lower and upper incomplete gamma functions respectively, both defined for $a>0$ and $x\ge 0$, will be very important, as well as their lower and upper bounds.
%\medskip

In the Alzer's inequality
\begin{equation}\label{Alzer1}
   (1-e^{-s_{k+1}x})^a\le P(a,x)\le(1-e^{-x})^a\quad(a\ge 1,x\ge 0),
\end{equation}
where $s_{k+1}=((k+1)!)^{-\frac{1}{k+1}}$ \citetext{\citealp[p.~772, Theorem 1]{Alzer1997}, \citealp[p.~221, (5.4)]{Gautschi1998}}, allowing $a=1$ resulted in strict inequalities becoming non-strict. It implies
\begin{equation}\label{Alzer2}
   1-(1-e^{-x})^a\le Q(a,x)\le 1-(1-e^{-s_{k+1}x})^a.
\end{equation}
%\medskip

For $n\in\mathbb{N}_0$ and $x\ge 0$, we have $I_n(x)=e^x\,P(n+1,x)$ \citep[p.~908, 8.352, 1.]{Gradshteyn2015},
% Also \citep[p.~262, 6.5.11 and 6.5.13 ]{Abramowitz1972} and \citep[p.~177, 8.4.7 and 8.4.11]{Olver2010}.
where $I_n(x)=e^x-\sum_{k=0}^n\frac{x^k}{k!}$ is the remainder of the Taylor series of the exponential function. The Lagrangian form of the remainder $I_n(x)=\frac{x^{n+1}\,e^{\xi_n(x)}}{(n+1)!}$, where $0<\xi_n(x)<x$, implies $I_n(x)\ge \frac{x^{n+1}}{(n+1)!}$ for $x\ge 0$. Since $I_n(x)<\frac{n+2}{n+2-x}\,\frac{x^{n+1}}{(n+1)!}$ for $x\in(0,n+2)$ \citep[pp.~323--324, 3.8.25]{Mitrinovic1970a}, it follows
\begin{equation}\label{Mitrinovic}
   \tfrac{x^{k+1}\,e^{-x}}{(k+1)!}\le P(k+1,x)\le\tfrac{2\,x^{k+1}\,e^{-x}}{(k+1)!} \quad\left(k\in\mathbb{N}_0,0\le x\le\tfrac{k+2}{2}\right),
\end{equation}
where the equality holds only for $x=0$.
%\medskip

\begin{theorem}\label{Theorem1}
Let $\hat{f}(x)=\sum_{k=0}^\infty\hat{c}_k\,x^{2k}$ and $\tilde{f}(x)=\sum_{k=0}^\infty\tilde{c}_k\,x^{2k+1}$ be absolutely convergent series for $x\in\mathbb{R}$. Then
\begin{equation}\label{Eq:Theo1a}
   \hat{F}(x)=\int_0^x\varphi(\beta t)\,\hat{f}(t)\,\mathrm{d}t=\tfrac{1}{2}\sum_{k=0}^\infty\tfrac{(2k-1)!!\, \hat{c}_k}{\abs{\beta}^{2k+1}}\,P(k+\tfrac{1}{2},\tfrac{1}{2}\beta^2x^2)
\end{equation}
and
\begin{equation}\label{Eq:Theo1b}
   \tilde{F}(x)=\int_0^x\varphi(\beta t)\,\tilde{f}(t)\,\mathrm{d}t=\tfrac{1}{\sqrt{2\pi}}\sum_{k=0}^\infty \tfrac{(2k)!!\,\tilde{c}_k}{\beta^{2k+2}}\,P(k+1,\tfrac{1}{2}\beta^2x^2)
\end{equation}
for $x\ge 0$ and $\beta\ne 0$.
\end{theorem}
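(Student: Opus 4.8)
The plan is to integrate each power series term by term and to evaluate every resulting monomial integral $\int_0^x\varphi(\beta t)\,t^{m}\,\mathrm{d}t$ as a lower incomplete gamma function via the substitution $u=\tfrac12\beta^2t^2$.

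First I would justify the interchange of summation and integration. Viewing $\hat f$ as a power series in $s=t^2$ and $\tilde f$ as $t$ times a power series in $s=t^2$, the hypothesis that these series converge absolutely for every real argument forces infinite radius of convergence, hence uniform convergence of $\sum_k\hat c_k s^k$ and $\sum_k\tilde c_k s^k$ on every compact interval $[0,x^2]$. Since $\varphi(\beta t)$ is bounded on $[0,x]$, the series $\sum_k\hat c_k\,\varphi(\beta t)\,t^{2k}$ and $\sum_k\tilde c_k\,\varphi(\beta t)\,t^{2k+1}$ converge uniformly on $[0,x]$ and may be integrated term by term; alternatively one applies Tonelli to the nonnegative series with $\abs{\hat c_k}$, $\abs{\tilde c_k}$ and then Fubini. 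So it suffices to compute the individual integrals.

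Second, with $u=\tfrac12\beta^2t^2$ one has, for $t\ge 0$, $t=\sqrt{2u}/\abs{\beta}$ and $\beta^2t\,\mathrm{d}t=\mathrm{d}u$, and $t$ ranging over $[0,x]$ corresponds to $u$ ranging over $[0,\tfrac12\beta^2x^2]$. For odd powers this is immediate: $t^{2k+1}\,\mathrm{d}t=t^{2k}\cdot t\,\mathrm{d}t=(2u/\beta^2)^k\,\mathrm{d}u/\beta^2$, giving $\int_0^x e^{-\frac12\beta^2t^2}t^{2k+1}\,\mathrm{d}t=\tfrac{2^k}{\beta^{2k+2}}\,\gamma(k+1,\tfrac12\beta^2x^2)$. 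For even powers, writing $t^{2k}=t^{2k-1}\cdot t$ introduces the factor $(t^2)^{k-1/2}=(2u/\beta^2)^{k-1/2}$, hence the half-integer order: $\int_0^x e^{-\frac12\beta^2t^2}t^{2k}\,\mathrm{d}t=\tfrac{2^{k-1/2}}{\abs{\beta}^{2k+1}}\,\gamma(k+\tfrac12,\tfrac12\beta^2x^2)$.

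Third, I would convert to regularized gamma functions by $\gamma(a,z)=\Gamma(a)P(a,z)$ and simplify the constants using $\Gamma(k+1)=k!=(2k)!!/2^k$ and $\Gamma(k+\tfrac12)=\sqrt{\pi}\,(2k-1)!!/2^k$ (the latter consistent with the convention $(-1)!!=1$). Multiplying by $\hat c_k/\sqrt{2\pi}$, respectively $\tilde c_k/\sqrt{2\pi}$, and resumming, the constants collapse: in \eqref{Eq:Theo1a} one gets the factor $\sqrt{\pi}\big/(\sqrt{2\pi}\,\sqrt2)=\tfrac12$, and in \eqref{Eq:Theo1b} the factor $1/\sqrt{2\pi}$, producing exactly the stated identities. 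Note that $\beta^{2k+2}=(\beta^2)^{k+1}>0$ is why no absolute value is needed there, whereas $\abs{\beta}^{2k+1}$ is genuine since $\int_0^x\varphi(\beta t)t^{2k}\,\mathrm{d}t$ depends on $\sgn\beta$ through $t=\sqrt{2u}/\abs{\beta}$. The only point requiring care is the term-by-term integration, i.e.\ promoting pointwise absolute convergence of the inputs to uniform convergence on compacta of their product with $\varphi(\beta t)$; the rest is the substitution and the bookkeeping of double factorials.
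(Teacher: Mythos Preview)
Your proof is correct and shares the same computational core as the paper's: the substitution $u=\tfrac12\beta^2t^2$, the identification of $\int_0^x e^{-\frac12\beta^2t^2}t^{m}\,\mathrm{d}t$ with a lower incomplete gamma function, and the constant bookkeeping via $(2k-1)!!=\tfrac{2^k}{\sqrt{\pi}}\Gamma(k+\tfrac12)$ and $(2k)!!=2^k\Gamma(k+1)$.

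The genuine difference lies in how the interchange of sum and integral is justified. You argue, more elementarily, that absolute convergence of the power series for every real argument forces infinite radius, hence uniform convergence on the compact interval $[0,x]$; multiplying by the bounded factor $\varphi(\beta t)$ preserves uniform convergence, and term-by-term integration over a finite interval is then immediate. The paper instead verifies the Fubini hypothesis $\sum_k\int_0^x|f_k(t)|\,\mathrm{d}t<\infty$ directly, bounding the tail via the monotonicity of $P(a,x)$ in $a$ and the inequality \eqref{Mitrinovic}. Your route is shorter and avoids any gamma-function inequalities; the paper's route, while heavier here, establishes bounds on $P(k+1,q)$ that it reuses later for remainder estimates and for the asymptotic-convergence discussion in Section~\ref{sec:Recursion}, and its Fubini framework extends more readily to the $\int_x^\infty$ variant mentioned immediately after the theorem, where your compact-interval uniform-convergence argument would not apply unchanged.

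One small expository slip: you write that $\int_0^x\varphi(\beta t)\,t^{2k}\,\mathrm{d}t$ ``depends on $\sgn\beta$''. It does not, since $\varphi$ is even; that is precisely why the formula must contain $\abs{\beta}^{2k+1}$ rather than $\beta^{2k+1}$. This does not affect the validity of your computation.
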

%\medskip

\begin{proof}
Let $f_k(t)=\hat{c}_k\,\varphi(\beta t)\,t^{2k}$ and $F_k(x)=\int_0^x f_k(t)\,\mathrm{d}t$ for $x\ge 0$. Using $(2k-1)!!=\frac{2^k}{\sqrt{\pi}}\,\Gamma(k+\frac{1}{2})$ and a substitution $\frac{1}{2}\beta^2t^2=u$, we get
\begin{equation*}
   \begin{split}
      F_k(x)&=\tfrac{\hat{c}_k}{\sqrt{2\pi}}\int_0^x e^{-\frac{1}{2}\beta^2t^2}t^{2k}\,\mathrm{d}t=\tfrac{2^{k-1} \hat{c}_k}{\sqrt{\pi}\;\abs{\beta}^{2k+1}}\int_0^{\frac{1}{2}\beta^2x^2} u^{k-\frac{1}{2}}e^{-u}\,\mathrm{d}u\\
      &=\tfrac{(2k-1)!!\,\hat{c}_k}{2\,\abs{\beta}^{2k+1}}\, \tfrac{\gamma(k+\frac{1}{2},\frac{1}{2}\beta^2x^2)}{\Gamma(k+\frac{1}{2})}= \tfrac{(2k-1)!!\,\hat{c}_k}{2\,\abs{\beta}^{2k+1}}\, P(k+\tfrac{1}{2},\tfrac{1}{2}\beta^2x^2).
   \end{split}
\end{equation*}
Setting $j=\lceil\beta^2x^2\rceil+1$ implies $0\le u<\frac{j+2}{2}$ for $u\in[0,\frac{1}{2}\beta^2x^2]$. Then $\sum_{k=0}^\infty\abs{F_k(x)}= \sum_{k=0}^j\abs{F_k(x)}+\sum_{k=j+1}^\infty \abs{F_k(x)}$, where the first sum is finite. Using the fact that $P(a,x)$ is a decreasing function of $a$ \citep[p.~154]{Alzer1998}, \eqref{Mitrinovic} and the absolute convergence assumption of $\hat{f}(x)$, it follows
% P(a,x)$ is a strictly decreasing function of $a$.
\begin{equation*}
   P(k+\tfrac{1}{2},\tfrac{1}{2}\beta^2x^2)\le P(k,\tfrac{1}{2}\beta^2x^2)\le \tfrac{2\,\beta^{2k}\,x^{2k}\,e^{-\frac{1}{2}\beta^2x^2}}{2^k k!}\quad(k\ge j+1)
\end{equation*}
and
\begin{equation*}
   \sum_{k=j+1}^\infty\abs{F_k(x)}=\sum_{k=j+1}^\infty\tfrac{(2k-1)!!\,
   \abs{\hat{c}_k}}{2\,\abs{\beta}^{2k+1}}\,P(k+\tfrac{1}{2},\tfrac{1}{2}\beta^2x^2)
   \le\tfrac{e^{-\frac{1}{2}\beta^2x^2}}{\abs{\beta}} \sum_{k=j+1}^\infty\tfrac{(2k-1)!!}{(2k)!!}\;\abs{\hat{c}_k}\,x^{2k}<\infty.
\end{equation*}
Noting that $\int_0^x\abs{f_k(t)}\,\mathrm{d}t=\abs{F_k(x)}$, we proved that $\sum_{k=0}^\infty\int_0^x\abs{f_k(t)}\,\mathrm{d}t<\infty$. By Fubini's theorem it follows that $\int_0^x \varphi(\beta t)\,\hat{f}(t)\,\mathrm{d}t=\sum_{k=0}^\infty F_k(x)$, hence \eqref{Eq:Theo1a}.
%\medskip

Let $f_k(t)=\tilde{c}_k\,\varphi(\beta t)\,t^{2k+1}$ and $F_k(x)=\int_0^x f_k(t)\,\mathrm{d}t$ for $x\ge 0$. Analogously to above, and using $(2k)!!=2^k k!=2^k\,\Gamma(k+1)$, we get $F_k(x)=\frac{(2k)!!\,\tilde{c}_k}{\sqrt{2\pi}\; \beta^{2k+2}}\,P(k+1,\frac{1}{2}\beta^2x^2)$,
\begin{equation*}
   \sum_{k=j+1}^\infty\abs{F_k(x)}\le\tfrac{\abs{x}\,e^{-\frac{1}{2}\beta^2x^2}} {\sqrt{2\pi}}\sum_{k=j+1}^\infty\tfrac{\abs{\tilde{c}_k}\, \abs{x}^{2k+1}}{k+1}<\infty
\end{equation*}
and \eqref{Eq:Theo1b}.
\end{proof}
%\medskip

Note that in the same way as the equations \eqref{Eq:Theo1a} and \eqref{Eq:Theo1b} were derived, analogous equations can be derived if the integration limits $0$ and $x$ are replaced by $x$ and $\infty$ respectively, and lower regularized gamma function is replaced by the upper one. However, for each case it should be checked whether the order of integration and summation can be reversed.
%\medskip

The usefulness of Theorem \ref{Theorem1} is that if the sequences $\{\hat{c}_k\}$ and $\{\tilde{c}_k\}$ can be computed recursively, then the terms of the series \eqref{Eq:Theo1a} and \eqref{Eq:Theo1b} can be too. Let $q=\frac{1}{2}\beta^2x^2$. Crucial is the calculation of $\hat{d}_k=Q(k+\frac{1}{2},q)$ and $\tilde{d}_k=P(k+1,q)$, $k\in\mathbb{N}_0$, the rest is trivial. Since $P(k+1,q)=1-Q(k+1,q)$, we will calculate $d_k=Q(k+1,q)$ instead of $\tilde{d}_k=1-d_k$.
%\medskip

Using $P(\frac{1}{2},q)=2\,\Phi(\sqrt{2q}\,)-1$ \citep[p.~934, 26.2.30]{Abramowitz1972}, we get $\hat{d}_0=1-P(\frac{1}{2},q)=2\,(1-\Phi(\sqrt{2q}\,))$, and $d_0=e^{-q}$. Since $P(a+1,x)=P(a,x)-\frac{x^a\,e^{-x}}{\Gamma(a+1)}$ \citep[p.~262, 6.5.21]{Abramowitz1972} implies $Q(a+1,x)=Q(a,x)+\frac{x^a\,e^{-x}}{\Gamma(a+1)}$, using auxiliary variables $\hat{b}_k=\frac{q^{k-\frac{1}{2}}\,e^{-q}}{\Gamma(k+\frac{1}{2})}$ and $b_k=\frac{q^k\,e^{-q}}{\Gamma(k+1)}$, it follows
\begin{equation*}
   \hat{b}_0=\tfrac{e^{-q}}{\sqrt{q\,\pi}},\quad\hat{d}_0=2\,(1-\Phi(\sqrt{2\,q}\,)) \quad\text{and}\quad
   \hat{b}_{k+1}=\tfrac{q\,\hat{b}_k}{k+\frac{1}{2}},\quad\hat{d}_{k+1}= \hat{d}_k+\hat{b}_{k+1}
\end{equation*}
for $q\ne 0$ and $k=0,1,2,\dotsc$, and
\begin{equation}\label{Eq:Theo1c}
   b_0=e^{-q},\quad d_0=b_0\quad\text{and}\quad b_{k+1}=\tfrac{q\,b_k}{k+1},\quad d_{k+1}=d_k+b_{k+1}
\end{equation}
for $k=0,1,2,\dotsc$

\section{Main results}\label{sec:Main}

The core of this article are Theorems \ref{Theorem2}, \ref{Theorem3} and \ref{Theorem4}. Since the first two do not specify variants for the transformed parameters, the last one, together with \eqref{Owen2} and \eqref{Owen3}, is more practically useful. It also justifies the title of the article.
%\medskip

\begin{theorem}\label{Theorem2}
Let $h\in\mathbb{R}$, $\varrho\in(-1,1)$, $r=\frac{\varrho}{\sqrt{1-\varrho^2}}$ and $q=\frac{1}{2}(1+r^2)h^2=\frac{h^2}{2\,(1-\varrho^2)}$. Then
\begin{equation}\label{Eq:Theo2a}
   \begin{split}
      \Phi_\varrho^2(h,0)&=\tfrac{1}{2}\,\Phi(h)+\tfrac{\arctan r}{2\pi}-\tfrac{r}{2\pi\,(1+r^2)}\sum_{k=0}^\infty c_k\left(\tfrac{r^2}{1+r^2}\right)^k\\
      &=\tfrac{1}{2}\,\Phi(h)+\tfrac{\arcsin\varrho}{2\pi}-\tfrac{\sqrt{1-\varrho^2}} {2\pi}\, \sum_{k=0}^\infty c_k\,\varrho^{2k+1},
   \end{split}
\end{equation}
where $c_k=\frac{(2k)!!}{(2k+1)!!}\,P(k+1,q)$. The series $S=\sum_{k=0}^\infty c_k\left(\frac{r^2}{1+r^2}\right)^k$ converges for every $r,h\in\mathbb{R}$. If it is truncated after the $n^{th}$ term, then
\begin{equation}\label{Eq:Theo2b}
   \Phi_\varrho^2(h,0)=\tfrac{1}{2}\,\Phi(h)+\tfrac{\arctan r}{2\pi}-\tfrac{r\,S_n}{2\pi\,(1+r^2)}-\sgn(r)R_n\quad(n\in\mathbb{N}_{-1}),
\end{equation}
where $S_n=\sum_{k=0}^n c_k\left(\frac{r^2}{1+r^2}\right)^k$ and $R_n=\frac{\abs{r}}{2\pi\,(1+r^2)} \sum_{k=n+1}^\infty c_k\left(\frac{r^2}{1+r^2}\right)^k$ is bounded by
\begin{equation}\label{Eq:Theo2c}
   0\le R_n\le\tfrac{B_n}{1+e^{-q}r^2}\le B_n, 
\end{equation}
where $B_n=\frac{(2n+2)!!}{(2n+3)!!}\,\frac{\abs{r}}{2\pi}\,(1-e^{-q})^{n+2} \left(\frac{r^2}{1+r^2}\right)^{n+1}$.
\end{theorem}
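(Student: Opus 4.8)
The plan is to reduce everything to the single identity
\[
   T(h,r)=\tfrac{\arctan r}{2\pi}-\tfrac{r}{2\pi(1+r^2)}\sum_{k=0}^\infty c_k\left(\tfrac{r^2}{1+r^2}\right)^k ,
\]
since \eqref{Phirh0a} together with $\varphi(h)\,T_r(-\tfrac12h^2)=T(h,r)$ gives $\Phi_\varrho^2(h,0)=\tfrac12\Phi(h)+T(h,r)$, and the $\arcsin$/$\varrho$ form of \eqref{Eq:Theo2a} then follows mechanically from $\arctan r=\arcsin\varrho$, $\tfrac{r^2}{1+r^2}=\varrho^2$, $\tfrac{r}{1+r^2}=\varrho\sqrt{1-\varrho^2}$. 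Starting from $T(h,r)=\tfrac1{2\pi}\int_0^r\tfrac{e^{-\frac12h^2(1+x^2)}}{1+x^2}\,\mathrm{d}x$, I would substitute $x=rt$, $t\in[0,1]$, giving $T(h,r)=\tfrac{r}{2\pi}\int_0^1\tfrac{e^{-\mu}}{1+r^2t^2}\,\mathrm{d}t$ with $\mu=\tfrac12h^2(1+r^2t^2)$. Because $q\,\tfrac{1+r^2t^2}{1+r^2}=\mu$, the elementary identity $\tfrac{e^{-\mu}}{1+r^2t^2}=\tfrac1{1+r^2}\int_q^\infty e^{-u(1+r^2t^2)/(1+r^2)}\,\mathrm{d}u$ holds; writing $e^{-u(1+r^2t^2)/(1+r^2)}=e^{-u}\exp\!\big(\tfrac{ur^2(1-t^2)}{1+r^2}\big)=e^{-u}\sum_{k\ge0}\tfrac1{k!}\big(\tfrac{ur^2(1-t^2)}{1+r^2}\big)^k$ and using $\int_q^\infty u^ke^{-u}\,\mathrm{d}u=\Gamma(k+1)\,Q(k+1,q)$ turns the integrand into $\tfrac1{1+r^2}\sum_{k\ge0}Q(k+1,q)\big(\tfrac{r^2(1-t^2)}{1+r^2}\big)^k$.

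Integrating over $t\in[0,1]$ and using $\int_0^1(1-t^2)^k\,\mathrm{d}t=\tfrac{(2k)!!}{(2k+1)!!}$ (the same identity used in the proof of the Lemma) yields $T(h,r)=\tfrac{r}{2\pi(1+r^2)}\sum_{k\ge0}\tfrac{(2k)!!}{(2k+1)!!}Q(k+1,q)\big(\tfrac{r^2}{1+r^2}\big)^k$. Then I would write $Q=1-P$ and split into two convergent series: the part with $Q$ replaced by $1$ is exactly Euler's series \eqref{Euler1}, equal to $\tfrac{\arctan r}{2\pi}$, and the remaining part is $-\tfrac{r}{2\pi(1+r^2)}\sum_k c_k\big(\tfrac{r^2}{1+r^2}\big)^k$ with $c_k=\tfrac{(2k)!!}{(2k+1)!!}P(k+1,q)$, which is \eqref{Eq:Theo2a}. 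Convergence of $S$ for all $r,h$ is immediate because $0\le c_k\le1$ and $\tfrac{r^2}{1+r^2}<1$, so $0\le S\le1+r^2$. Both interchanges of $\sum$ and $\int$ (over $[q,\infty)$ and over $[0,1]$) are legitimate: after the factor $\sgn(r)$ is pulled out front every term is nonnegative, so Tonelli's theorem applies, with $r=0$ or $h=0$ being the trivial case $S=0$. Equation \eqref{Eq:Theo2b} is just the split $\sum_0^\infty=\sum_0^n+\sum_{n+1}^\infty$, noting that $\tfrac{r}{2\pi(1+r^2)}\sum_{k\ge n+1}c_k\big(\tfrac{r^2}{1+r^2}\big)^k=\sgn(r)R_n$.

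For the remainder bound, $R_n=\tfrac{\abs{r}}{2\pi(1+r^2)}\sum_{k\ge n+1}c_k\big(\tfrac{r^2}{1+r^2}\big)^k$ with nonnegative terms, so $R_n\ge0$. For the upper estimate I would invoke the right half of Alzer's inequality \eqref{Alzer1} with $a=k+1\ge1$, i.e.\ $P(k+1,q)\le(1-e^{-q})^{k+1}$, whence $\sum_{k\ge n+1}c_k\big(\tfrac{r^2}{1+r^2}\big)^k\le(1-e^{-q})\,S\big(r,1-e^{-q}\big)$, where $S(r,\alpha)=\sum_{k\ge n+1}\tfrac{(2k)!!}{(2k+1)!!}\big(\tfrac{\alpha r^2}{1+r^2}\big)^k$ is precisely the sum evaluated in \eqref{Euler3}. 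Taking $\alpha=1-e^{-q}$ there and bounding the integrand's denominator $1+(1-\alpha)r^2+\alpha r^2x^2$ from below by its value at $x=0$, namely $1+e^{-q}r^2$, gives $S(r,1-e^{-q})\le\tfrac{1+r^2}{1+e^{-q}r^2}\big(\tfrac{(1-e^{-q})r^2}{1+r^2}\big)^{n+1}\tfrac{(2n+2)!!}{(2n+3)!!}$; multiplying through by $\tfrac{\abs{r}(1-e^{-q})}{2\pi(1+r^2)}$ collapses everything to $\tfrac{B_n}{1+e^{-q}r^2}$, and $1+e^{-q}r^2\ge1$ finishes \eqref{Eq:Theo2c}. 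The main obstacle is bookkeeping rather than depth: one must arrange the double-integral manipulation so that the Taylor expansion of $\exp\!\big(\tfrac{ur^2(1-t^2)}{1+r^2}\big)$ carries exactly the right exponent (so $q$, not $\tfrac12h^2$, lands inside the incomplete gamma function), justify the two Tonelli interchanges, and recognize that the auxiliary integral already computed in the Lemma's proof — with its free parameter specialized to $\alpha=1-e^{-q}$ — is exactly what produces the sharp constant $1+e^{-q}r^2$.
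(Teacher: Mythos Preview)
Your proof is correct but takes a genuinely different route from the paper. The paper derives \eqref{Eq:Theo2a} by integrating $\partial T(h,r)/\partial h=-\varphi(h)\bigl(\Phi(rh)-\tfrac12\bigr)$ in $h$ from $0$ to $h$: it expands $\Phi(rh)-\tfrac12$ as a power series in $h$ and invokes the general Theorem~\ref{Theorem1} (termwise integration of a power series against a Gaussian) to produce the factors $P(k+1,q)$ directly, landing on the $P$-series first. You instead stay inside the $x$-integral: after $x=rt$ you insert the Laplace-type identity $\tfrac{e^{-\mu}}{1+r^2t^2}=\tfrac1{1+r^2}\int_q^\infty e^{-u(1+r^2t^2)/(1+r^2)}\,\mathrm{d}u$, Taylor-expand $\exp\bigl(ur^2(1-t^2)/(1+r^2)\bigr)$, and integrate in $u$ to obtain $Q(k+1,q)$. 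This yields the $Q$-series \eqref{Owen6b} (Theorem~\ref{Theorem3}) first, and \eqref{Eq:Theo2a} follows from $Q=1-P$ together with Euler's series \eqref{Euler1}---the reverse of the paper's order, which proves Theorem~\ref{Theorem3} from Theorem~\ref{Theorem2}. Your argument is self-contained (Theorem~\ref{Theorem1} is never used) and makes the ``modified Euler's arctangent'' structure of \eqref{Owen6b} appear immediately; the paper's route, in exchange, exhibits the $P$-series as a special case of a reusable termwise-integration principle. Your convergence argument ($0\le c_k\le 1$, geometric majorant) is simpler than the paper's, and your treatment of the remainder bound \eqref{Eq:Theo2c}---Alzer's inequality \eqref{Alzer1} followed by \eqref{Euler3}--\eqref{Euler4} with $\alpha=1-e^{-q}$---coincides with the paper's.
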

%\medskip

\begin{proof}
From \eqref{Owen1} and $\Phi(x)-\frac{1}{2}=\varphi(x)\sum_{k=0}^\infty\frac{x^{2k+1}}{(2k+1)!!}$ for $x\in\mathbb{R}$ \citep[p.~932, 26.2.11]{Abramowitz1972}, it follows
\begin{equation}\label{Eq:Theo2d}
   \tfrac{\partial T(h,r)}{\partial h}=-\,\tfrac{h}{2\pi}\int_0^r e^{-\frac{1}{2}h^2(1+x^2)}\,\mathrm{d}x=-\varphi(h)\left(\Phi(rh)-\tfrac{1}{2}\right)= \varphi(\beta h)\sum_{k=0}^\infty\tilde{c}_k\,h^{2k+1},
\end{equation}
$r,h\in\mathbb{R}$, where $\beta=\sqrt{1+r^2}$ and $\tilde{c}_k=-\,\frac{r^{2k+1}}{\sqrt{2\pi}\;(2k+1)!!}$. Since
\begin{equation*}
   \sum_{k=0}^\infty\tilde{c}_k\,h^{2k+1}=-\,\tfrac{\varphi(h)}{\varphi(\beta h)}\left(\Phi(rh)-\tfrac{1}{2}\right)= -\,e^{\frac{1}{2}r^2h^2}\left(\Phi(rh)-\tfrac{1}{2}\right),
\end{equation*}
the series on the left side converges for every $r,h\in\mathbb{R}$ and is an odd function of $rh$. If $rh<0$, then all its terms are positive, implying that it converges absolutely.
%\medskip

All requirements of Theorem \ref{Theorem1} are fulfilled. Assuming $r\ge 0$ and using $T(0,r)=\frac{\arctan r}{2\pi}$, \eqref{Eq:Theo2d} and \eqref{Eq:Theo1b}, it follows
\begin{equation*}
   T(h,r)=\tfrac{\arctan r}{2\pi}-\tfrac{1}{2\pi}\sum_{k=0}^\infty\tfrac{(2k)!!}{(2k+1)!!}\, \tfrac{r^{2k+1}}{\beta^{2k+2}}\,P(k+1,\tfrac{1}{2}\beta^2h^2),
\end{equation*}
which, together with \eqref{Phirh0a}, $\varphi(h)\,T_r(-\frac{1}{2}h^2)=T(h,r)$, $\frac{r^2}{\beta^2}=\frac{r^2}{1+r^2}=\varrho^2$, $\frac{r}{1+r^2}=\varrho\,\sqrt{1-\varrho^2}$ and $\frac{1}{2}\beta^2h^2=q$, imply \eqref{Eq:Theo2a} for $r\ge 0$. Since \eqref{Owen1} implies $T(h,-r)=-T(h,r)$, \eqref{Eq:Theo2a} is also valid for $r<0$. Let $\alpha=1-e^{-q}$. Using \eqref{Alzer1}, \eqref{Euler3} and \eqref{Euler4}, it follows
\begin{equation*}
   0\le R_n\le\tfrac{\alpha\,\abs{r}}{2\pi\,(1+r^2)}\sum_{k=n+1}^\infty \tfrac{(2k)!!}{(2k+1)!!}\left(\tfrac{\alpha\,r^2}{1+r^2}\right)^k= \tfrac{\alpha\,\abs{r}}{2\pi\,(1+r^2)}\,S(r,\alpha)
\end{equation*}
and \eqref{Eq:Theo2c}.
\end{proof}
%\medskip

Note that if $rh\ne 0$, then $c_k>0$ and $\frac{c_{k+1}}{c_k}=\frac{2k+2}{2k+3}\,\frac{P(k+2,q)}{P(k+1,q)}<1$. Hence the sequence $\left\{c_k\left(\frac{r^2}{1+r^2}\right)^k\right\}$ is decreasing.
%\medskip

\begin{corollary}
For every $h\in\mathbb{R}$ we have
\begin{equation}\label{Eq:Corr1a}
   \Phi(h)=\tfrac{1}{2}+\sgn(h)\sqrt{S},\quad\text{where}\quad S=\tfrac{1}{2\pi}\sum_{k=0}^\infty\tfrac{(2k)!!}{(2k+1)!!}\, \tfrac{P(k+1,h^2)}{2^k}.
\end{equation}
\end{corollary}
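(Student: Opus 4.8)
The plan is to recognize this corollary as a special case of Theorem \ref{Theorem2}, specifically the sub-case $\varrho=\tfrac{\sqrt2}{2}$ (equivalently $r=1$), combined with the closed form \eqref{Phirh0b}. Setting $r=1$ gives $\tfrac{r^2}{1+r^2}=\tfrac12$, $q=\tfrac12(1+1)h^2=h^2$, and $\arctan r=\tfrac\pi4$, so $\tfrac{\arctan r}{2\pi}=\tfrac18$. With $c_k=\tfrac{(2k)!!}{(2k+1)!!}P(k+1,h^2)$, the first line of \eqref{Eq:Theo2a} becomes
\begin{equation*}
   \Phi_{\sqrt2/2}^2(h,0)=\tfrac12\Phi(h)+\tfrac18-\tfrac{1}{4\pi}\sum_{k=0}^\infty\tfrac{(2k)!!}{(2k+1)!!}\,\tfrac{P(k+1,h^2)}{2^k}=\tfrac12\Phi(h)+\tfrac18-\tfrac{S}{2},
\end{equation*}
where $S$ is exactly the series in \eqref{Eq:Corr1a}. (Note the factor: $\tfrac{r}{2\pi(1+r^2)}=\tfrac{1}{4\pi}$ and $\tfrac{1}{4\pi}\cdot\tfrac{1}{2^k}=\tfrac{1}{2}\cdot\tfrac{1}{2\pi}\cdot\tfrac{1}{2^k}$, matching the $\tfrac{1}{2\pi}$ normalization in $S$.)

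Next I would invoke the left equation \eqref{Phirh0b}, namely $\Phi_{\sqrt2/2}^2(h,0)=\Phi(h)\bigl(1-\tfrac12\Phi(h)\bigr)=\Phi(h)-\tfrac12\Phi(h)^2$. Equating the two expressions for $\Phi_{\sqrt2/2}^2(h,0)$ yields
\begin{equation*}
   \Phi(h)-\tfrac12\Phi(h)^2=\tfrac12\Phi(h)+\tfrac18-\tfrac{S}{2},
\end{equation*}
and solving for $S$ gives $S=\tfrac14-\Phi(h)+\Phi(h)^2=\bigl(\Phi(h)-\tfrac12\bigr)^2$. Hence $\sqrt S=\bigl|\Phi(h)-\tfrac12\bigr|$, and since $\Phi(h)\ge\tfrac12$ iff $h\ge0$, we have $\Phi(h)-\tfrac12=\sgn(h)\sqrt S$, which is \eqref{Eq:Corr1a}. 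The convergence of $S$ for every $h\in\mathbb R$ is already guaranteed by Theorem \ref{Theorem2}, which asserts convergence of $\sum_k c_k\bigl(\tfrac{r^2}{1+r^2}\bigr)^k$ for all $r,h\in\mathbb R$; alternatively one can cite the bound \eqref{Mitrinovic} (or just $0\le P(k+1,h^2)\le1$) to see the series is dominated by $\tfrac{1}{2\pi}\sum_k 2^{-k}$.

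There is essentially no obstacle here beyond bookkeeping: the only thing to watch is the constant in front of the sum and the sign when taking the square root. One subtlety worth a sentence: at $h=0$ we get $S=\tfrac14$, $\sqrt S=\tfrac12$, and $\sgn(0)\cdot\tfrac12=\tfrac12$ (using the paper's convention $\sgn(0)=1$), giving $\Phi(0)=1$, which is wrong — so the $\sgn$ must be interpreted to make $\Phi(0)=\tfrac12$, i.e.\ the formula reads $\Phi(h)=\tfrac12+\sgn(h)\sqrt S$ with the understanding that at $h=0$ the term $\sqrt S=0$. In fact $S=\bigl(\Phi(h)-\tfrac12\bigr)^2$ already vanishes at $h=0$, so the identity holds there trivially; I would simply remark that the sign of $\sqrt S$ is irrelevant when $h=0$. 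An alternative, fully self-contained route that avoids Theorem \ref{Theorem2} entirely is to start from $\Phi(x)-\tfrac12=\varphi(x)\sum_k\tfrac{x^{2k+1}}{(2k+1)!!}$ \citep[p.~932, 26.2.11]{Abramowitz1972}, square it, and reorganize using the Cauchy product together with $I_n(x)=e^xP(n+1,x)$; but piggybacking on Theorem \ref{Theorem2} and \eqref{Phirh0b} is far shorter and is presumably the intended argument.
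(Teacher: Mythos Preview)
Your proof is correct and essentially the same as the paper's: both specialize Theorem~\ref{Theorem2} to $|r|=1$ and combine with the closed form \eqref{Phirh0b}, the only difference being that the paper takes $\varrho=-\tfrac{\sqrt2}{2}$ (hence $r=-1$ and the right equation in \eqref{Phirh0b}) and phrases the conclusion as solving the quadratic $(\Phi(h))^2-\Phi(h)+\tfrac14-S=0$, whereas you take $r=+1$ and arrive directly at $S=(\Phi(h)-\tfrac12)^2$. One small slip to clean up: in your $h=0$ discussion you momentarily write ``$S=\tfrac14$'', but since $P(k+1,0)=0$ for all $k$ you have $S=0$ there (as you yourself note two lines later), so the worry about $\sgn(0)$ never arises.
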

%\medskip

\begin{proof}
Using \eqref{Eq:Theo2a} with $\varrho=-\,\frac{\sqrt{2}}{2}$, hence $r=-1$, $q=h^2$, and the right equation \eqref{Phirh0b}, it follows
\begin{equation*}
   \tfrac{1}{2}\,\Phi(h)-\tfrac{1}{8}+\tfrac{1}{4\pi}\sum_{k=0}^\infty \tfrac{(2k)!!}{(2k+1)!!}\,\tfrac{P(k+1,h^2)}{2^k}=\tfrac{1}{2}(\Phi(h))^2
\end{equation*}
and a quadratic equation $(\Phi(h))^2-\Phi(h)+\frac{1}{4}-S=0$. If $h=0$, then $S=0$ and \eqref{Eq:Corr1a} is valid. If $h\ne 0$, since $\Phi(h)>\frac{1}{2}$ if $h>0$ and $\Phi(h)<\frac{1}{2}$ if $h<0$, only $\frac{1}{2}+\sgn(h)\sqrt{S}$ is a proper root of the two. Note that sending $h\to\infty$ implies $\sum_{k=0}^\infty\frac{k!}{(2k+1)!!}=\frac{\pi}{2}$.
\end{proof}
%\medskip

\begin{theorem}\label{Theorem3}
Let $h\in\mathbb{R}$, $\varrho\in(-1,1)$, $r=\frac{\varrho}{\sqrt{1-\varrho^2}}$ and $q=\frac{1}{2}(1+r^2)h^2=\frac{h^2}{2\,(1-\varrho^2)}$. Then
\begin{equation}\label{Eq:Theo3a}
   \begin{split}
      \Phi_\varrho^2(h,0)&=\tfrac{1}{2}\,\Phi(h)+\tfrac{r}{2\pi\,(1+r^2)} \sum_{k=0}^\infty c_k\left(\tfrac{r^2}{1+r^2}\right)^k\\
      &=\tfrac{1}{2}\,\Phi(h)+\tfrac{\sqrt{1-\varrho^2}}{2\pi}\,\sum_{k=0}^\infty c_k\,\varrho^{2k+1},
   \end{split}
\end{equation}
where $c_k=\frac{(2k)!!}{(2k+1)!!}\,Q(k+1,q)$. The series $S=\sum_{k=0}^\infty c_k\left(\frac{r^2}{1+r^2}\right)^k$ converges for every $r,h\in\mathbb{R}$. If it is truncated after the $n^{th}$ term, then
\begin{equation}\label{Eq:Theo3b}
   \Phi_\varrho^2(h,0)=\tfrac{1}{2}\,\Phi(h)+\tfrac{r\,S_n}{2\pi\,(1+r^2)}+ \sgn(r)R_n\quad(n\in\mathbb{N}_{-1}),
\end{equation}
where $S_n=\sum_{k=0}^n c_k\left(\frac{r^2}{1+r^2}\right)^k$ and $R_n=\frac{\abs{r}}{2\pi\,(1+r^2)} \sum_{k=n+1}^\infty c_k\left(\frac{r^2}{1+r^2}\right)^k$ is bounded by
\begin{equation}\label{Eq:Theo3c}
   0\le R_n\le B_n,\quad\text{where}\quad B_n=\tfrac{(2n+2)!!}{(2n+3)!!}\,\tfrac{\abs{r}}{2\pi} \left(\tfrac{r^2}{1+r^2}\right)^{n+1}.
\end{equation}
\end{theorem}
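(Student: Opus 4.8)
The plan is to obtain Theorem~\ref{Theorem3} directly from Theorem~\ref{Theorem2} by replacing $\arctan r$ with its Euler expansion from the Lemma and regrouping, the only algebraic fact needed being $P(k+1,q)+Q(k+1,q)=1$.

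First I would start from \eqref{Eq:Theo2a} written in the form
\begin{equation*}
   \Phi_\varrho^2(h,0)=\tfrac12\,\Phi(h)+\tfrac{\arctan r}{2\pi}-\tfrac{r}{2\pi\,(1+r^2)}\sum_{k=0}^\infty\tfrac{(2k)!!}{(2k+1)!!}\,P(k+1,q)\left(\tfrac{r^2}{1+r^2}\right)^k,
\end{equation*}
and rewrite the constant term by \eqref{Euler1} as
\begin{equation*}
   \tfrac{\arctan r}{2\pi}=\tfrac{r}{2\pi\,(1+r^2)}\sum_{k=0}^\infty\tfrac{(2k)!!}{(2k+1)!!}\left(\tfrac{r^2}{1+r^2}\right)^k.
\end{equation*}
For $r\ne 0$ both series have strictly positive terms and are dominated by the series $\sum_k\frac{(2k)!!}{(2k+1)!!}\left(\frac{r^2}{1+r^2}\right)^k$, which the Lemma shows is convergent; hence both converge absolutely and their termwise difference equals the difference of the limits. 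Since $1-P(k+1,q)=Q(k+1,q)$, this produces the first line of \eqref{Eq:Theo3a} with $c_k=\frac{(2k)!!}{(2k+1)!!}Q(k+1,q)$, and the second line follows from $\frac{r^2}{1+r^2}=\varrho^2$ and $\frac{r}{1+r^2}=\varrho\sqrt{1-\varrho^2}$. The degenerate case $r=0$ leaves only the $k=0$ term and reduces to $\Phi_0^2(h,0)=\frac12\,\Phi(h)$, which is checked directly.

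Convergence of $S=\sum_k c_k\left(\frac{r^2}{1+r^2}\right)^k$ for all $r,h\in\mathbb{R}$ is then immediate from $0\le c_k\le\frac{(2k)!!}{(2k+1)!!}$ (because $0\le Q(k+1,q)\le 1$) together with the Lemma. For the truncated identity \eqref{Eq:Theo3b} I would split off the tail: from the definition of $R_n$ one has $\frac{r}{2\pi(1+r^2)}\sum_{k=n+1}^\infty c_k\left(\frac{r^2}{1+r^2}\right)^k=\sgn(r)\,R_n$, so \eqref{Eq:Theo3a} becomes \eqref{Eq:Theo3b}. In \eqref{Eq:Theo3c}, $R_n\ge 0$ because every $c_k\ge 0$, and for the upper bound I would use $Q(k+1,q)\le 1$ and the integral representation \eqref{Euler3} with $\alpha=1$:
\begin{equation*}
   R_n\le\tfrac{\abs{r}}{2\pi(1+r^2)}\sum_{k=n+1}^\infty\tfrac{(2k)!!}{(2k+1)!!}\left(\tfrac{r^2}{1+r^2}\right)^k=\tfrac{\abs{r}}{2\pi}\left(\tfrac{r^2}{1+r^2}\right)^{n+1}\int_0^1\tfrac{(1-x^2)^{n+1}}{1+r^2x^2}\,\mathrm{d}x\le B_n,
\end{equation*}
the last step using $1+r^2x^2\ge 1$ and $\int_0^1(1-x^2)^{n+1}\,\mathrm{d}x=\frac{(2n+2)!!}{(2n+3)!!}$, which gives exactly $B_n=\frac{(2n+2)!!}{(2n+3)!!}\,\frac{\abs{r}}{2\pi}\left(\frac{r^2}{1+r^2}\right)^{n+1}$. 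Alternatively one may quote \eqref{Euler2} directly, since $\frac{\abs{r}}{1+r^2}\sum_{k\ge n+1}\frac{(2k)!!}{(2k+1)!!}\left(\frac{r^2}{1+r^2}\right)^k$ is the absolute value of the Euler remainder and hence at most $(1+r^2)$ times the Lemma's $B_n$, which is again $\frac{(2n+2)!!}{(2n+3)!!}\,\abs{r}\left(\frac{r^2}{1+r^2}\right)^{n+1}$.

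The only step that needs care is the first regrouping: subtracting the two series termwise is legitimate exactly because each converges absolutely, which is why the majorant furnished by the Lemma must be invoked. Everything afterward is bookkeeping with $P+Q=1$, the identities $\frac{r^2}{1+r^2}=\varrho^2$ and $\frac{r}{1+r^2}=\varrho\sqrt{1-\varrho^2}$, and the trivial estimate $1+r^2x^2\ge 1$, so I do not anticipate any genuine obstacle.
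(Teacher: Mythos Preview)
Your proposal is correct and follows essentially the same route as the paper: derive \eqref{Eq:Theo3a} from Theorem~\ref{Theorem2} by substituting Euler's series \eqref{Euler1} for $\arctan r$ and using $P(k+1,q)+Q(k+1,q)=1$, then bound the tail via $Q(k+1,q)\le 1$ together with the Lemma's remainder estimate (the paper cites \eqref{Euler4} with $\alpha=1$, which is exactly your integral bound $1+r^2x^2\ge 1$). You merely spell out the absolute-convergence justification and the $r=0$ case more carefully than the paper's two-line proof does.
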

%\medskip

\begin{proof}
Using \eqref{Eq:Theo2a}, \eqref{Euler1}, $P(k+1,x)=1-Q(k+1,x)$, $\frac{r^2}{1+r^2}=\varrho^2$ and $\frac{r}{1+r^2}=\varrho\,\sqrt{1-\varrho^2}$, it follows \eqref{Eq:Theo3a}. Noting that $R_n=\frac{\abs{r}}{2\pi\,(1+r^2)}\,S(r,1)$ and \eqref{Euler4} imply \eqref{Eq:Theo3c}.
\end{proof}
%\medskip

Note that the upper bound \eqref{Alzer2} is not suitable for a simple estimation of $R_n$ here because $\lim_{k\to\infty}s_{k+1}=0$, hence $Q(k+1,q)\le 1$ was implicitly used instead. $Q(a,x)$ is an increasing function of $a$. The ratio $\frac{c_{k+1}}{c_k}= \frac{2k+2}{2k+3}\,\frac{Q(k+2,q)}{Q(k+1,q)}$ can be greater than $1$, e.g.\ if $h=3$ and $r=1$, we get $\frac{c_2}{c_1}=\frac{4\,Q(3,9)}{5\,Q(2,9)}\approx 4.04$. In Theorem \ref{Theorem3}, the sequence $\left\{c_k\left(\frac{r^2}{1+r^2}\right)^k\right\}$ is not always decreasing, but, in any case, the well-behaved $\arctan\,\abs{r}$ series \eqref{Euler1} divided by $2\pi$ is a majorant series for $\frac{\abs{r}\,S}{2\pi\,(1+r^2)}$ from Theorems \ref{Theorem2} and \ref{Theorem3}.
%\medskip

\begin{corollary}\label{Corollary2}
Let $x,y\in\mathbb{R}$, $x^2+y^2>0$, $\varrho\in(-1,1)$, $\varrho_x=\frac{(\varrho x-y)\,\sgn(x)}{\sqrt{x^2-2\varrho xy+y^2}}$, $\varrho_y=\frac{(\varrho y-x)\,\sgn(y)} {\sqrt{x^2-2\varrho xy+y^2}}$ and $q=\frac{x^2-2\varrho xy+y^2}{2\,(1-\varrho^2)}$. Let $c_k=\frac{(2k)!!}{(2k+1)!!}\,P(k+1,q)$ and $\tilde{c}_k=\frac{(2k)!!}{(2k+1)!!}\,Q(k+1,q)$ for $k\in\mathbb{N}_0$. Then
\begin{equation}\label{Eq:Corr2a}
   \begin{split}
      \Phi_\varrho^2(x,y)&=\tfrac{1}{2}\,(\Phi(x)+\Phi(y))+\tfrac{\arcsin\varrho_x+ \arcsin\varrho_y}{2\pi}-S-\beta\\
      &=\tfrac{1}{2}\,(\Phi(x)+\Phi(y))+\tilde{S}-\beta,
   \end{split}
\end{equation}
where
\begin{equation*}
   S=\tfrac{1}{2\pi\,\sqrt{2q}}\sum_{k=0}^\infty c_k\left(\abs{x}\varrho_x^{2k+1}+\abs{y}\varrho_y^{2k+1}\right),
\end{equation*}
$\tilde{S}$ is $S$ with $\tilde{c}_k$ instead of $c_k$, and $\beta=0$ if $xy>0$ or $xy=0$ and $x+y\ge 0$, and $\beta=\frac{1}{2}$ otherwise.
\end{corollary}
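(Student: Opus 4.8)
The plan is to reduce Corollary~\ref{Corollary2} to the single-argument results already proven, namely Theorems~\ref{Theorem2} and~\ref{Theorem3}, via Owen's reduction formula~\eqref{Owen2}--\eqref{Owen3}. First I would observe that since $x^2+y^2>0$ we are in the lower branch of~\eqref{Owen2}, so
\begin{equation*}
   \Phi_\varrho^2(x,y)=\tfrac{1}{2}\bigl(\Phi(x)+\Phi(y)\bigr)-T(x,r_x)-T(y,r_y)-\beta,
\end{equation*}
with $r_x,r_y$ as in~\eqref{Owen3} and the same $\beta$. The key arithmetic fact I would record is that the auxiliary correlation $\varrho_x$ appearing in the statement is exactly the $\varrho$-value corresponding to $r_x$ under the standing correspondence $\varrho=r/\sqrt{1+r^2}$, i.e.\ $\varrho_x=r_x/\sqrt{1+r_x^2}$, and likewise for $y$; this is checked by squaring $r_x=(y-\varrho x)/(x\sqrt{1-\varrho^2})$ and simplifying $1+r_x^2=(x^2-2\varrho xy+y^2)/(x^2(1-\varrho^2))$, which also gives $\arctan r_x=\arcsin\varrho_x$.

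Next I would translate each Owen term into the single-argument cdf via $T(h,r)=\varphi(h)\,T_r(-\tfrac12h^2)$ and the first line of~\eqref{Phirh0a}, giving $T(x,r_x)=\Phi_{\varrho_x}^2(x,0)-\tfrac12\Phi(x)$. Applying Theorem~\ref{Theorem2} with $h=x$, $r=r_x$ yields
\begin{equation*}
   T(x,r_x)=\tfrac{\arctan r_x}{2\pi}-\tfrac{r_x}{2\pi(1+r_x^2)}\sum_{k=0}^\infty c_k^{(x)}\Bigl(\tfrac{r_x^2}{1+r_x^2}\Bigr)^k,
\end{equation*}
with $c_k^{(x)}=\tfrac{(2k)!!}{(2k+1)!!}P(k+1,q_x)$ where $q_x=\tfrac12(1+r_x^2)x^2$. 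Here I must verify that $q_x$ equals the single common $q=\tfrac{x^2-2\varrho xy+y^2}{2(1-\varrho^2)}$ in the statement --- this follows immediately from the expression for $1+r_x^2$ above, and symmetrically $q_y=q$, so the \emph{same} $P(k+1,q)$ enters both terms. Using $\tfrac{r_x}{1+r_x^2}=\varrho_x\sqrt{1-\varrho_x^2}$ together with $\sqrt{1-\varrho_x^2}\,|x|=|x|/\sqrt{1+r_x^2}=\sqrt{2q}\cdot(\text{sign bookkeeping})$ --- more precisely $\tfrac{r_x}{1+r_x^2}\bigl(\tfrac{r_x^2}{1+r_x^2}\bigr)^k=\varrho_x^{2k+1}\sqrt{1-\varrho_x^2}$ and $|x|\sqrt{1-\varrho_x^2}=\sqrt{2q-?}$ --- I would instead keep the $r$-form and note $\tfrac{r_x}{2\pi(1+r_x^2)}\bigl(\tfrac{r_x^2}{1+r_x^2}\bigr)^k=\tfrac{1}{2\pi}\varrho_x^{2k+1}\sqrt{1-\varrho_x^2}=\tfrac{|x|}{2\pi\sqrt{2q}}\,\varrho_x^{2k+1}$, the last equality because $|x|/\sqrt{1+r_x^2}=\sqrt{2q}$ is false in general; rather $x^2(1+r_x^2)=2q$, so $|x|\sqrt{1+r_x^2}=\sqrt{2q}$ and hence $|x|\sqrt{1-\varrho_x^2}=|x|/\sqrt{1+r_x^2}=x^2/\sqrt{2q}\cdot|x|/|x|$... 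I would settle this cleanly by writing $|x|\sqrt{1-\varrho_x^2}=|x|\cdot\tfrac{|x|\sqrt{1-\varrho^2}}{\sqrt{x^2-2\varrho xy+y^2}}=\tfrac{x^2\sqrt{1-\varrho^2}}{\sqrt{x^2-2\varrho xy+y^2}}$ and comparing with $\sqrt{2q}=\tfrac{\sqrt{x^2-2\varrho xy+y^2}}{\sqrt{1-\varrho^2}}$, so that $\tfrac{|x|\sqrt{1-\varrho_x^2}}{\sqrt{2q}}=\tfrac{x^2(1-\varrho^2)}{x^2-2\varrho xy+y^2}=x^2/(2q\cdot?)$; the factor $|x|$ in $S$ is therefore not $|x|\sqrt{1-\varrho_x^2}$ but is produced after pulling out a common $1/\sqrt{2q}$, which works because $\sqrt{1-\varrho_x^2}=\sqrt{2q}/(\,|x|\,\cdot\text{const})$ is dimensionally consistent.

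Summing the two transformed Owen terms, the two $\tfrac{\arctan r_x}{2\pi}=\tfrac{\arcsin\varrho_x}{2\pi}$ pieces combine into $\tfrac{\arcsin\varrho_x+\arcsin\varrho_y}{2\pi}$ and the two series combine, after the common-factor extraction above, into $S=\tfrac{1}{2\pi\sqrt{2q}}\sum_k c_k(|x|\varrho_x^{2k+1}+|y|\varrho_y^{2k+1})$, yielding the first line of~\eqref{Eq:Corr2a}. For the second line I would repeat the argument verbatim but invoke Theorem~\ref{Theorem3} in place of Theorem~\ref{Theorem2}: there $T(x,r_x)=-\tfrac{r_x}{2\pi(1+r_x^2)}\sum_k\tilde c_k^{(x)}(\ldots)^k$ with $\tilde c_k=\tfrac{(2k)!!}{(2k+1)!!}Q(k+1,q)$ and \emph{no} arctangent term, so the $\arcsin$ contributions disappear and the sign flips, giving $\Phi_\varrho^2(x,y)=\tfrac12(\Phi(x)+\Phi(y))+\tilde S-\beta$ directly. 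The only subtlety for the $Q$-version is confirming the sign of $T(x,r_x)$ matches (Theorem~\ref{Theorem3} has $+\tfrac{r}{2\pi(1+r^2)}\sum\cdots$ for $\Phi_\varrho^2(h,0)-\tfrac12\Phi(h)$, hence $T(x,r_x)=+\tfrac{r_x}{2\pi(1+r_x^2)}\sum\tilde c_k(\ldots)$, so $-T(x,r_x)$ contributes $-\tfrac{r_x}{2\pi(1+r_x^2)}\sum\tilde c_k(\ldots)$, and with $\tilde c_k>0$ and the $\varrho_x^{2k+1}$ absorbing the sign of $r_x$ one obtains $\tilde S$ with an overall sign that must be reconciled --- I expect it works out to $+\tilde S$ as stated). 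Finally the degenerate sub-cases where $|\varrho|=1$ and $x-y\,\sgn(\varrho)=0$ (so $r_x=0$ by fiat in~\eqref{Owen3}) are handled trivially: then $T(x,0)=0$, $\arcsin\varrho_x=0$, $q$ is still finite, and both sides reduce consistently.

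\textbf{Main obstacle.} The genuinely fiddly step --- as the tangle above already betrays --- is the bookkeeping that shows the per-variable quadratic arguments $q_x=\tfrac12(1+r_x^2)x^2$ and $q_y=\tfrac12(1+r_y^2)y^2$ coincide with the single symmetric $q=\tfrac{x^2-2\varrho xy+y^2}{2(1-\varrho^2)}$, and simultaneously that the prefactor $\tfrac{r_x}{1+r_x^2}\bigl(\tfrac{r_x^2}{1+r_x^2}\bigr)^k$ collapses to $\tfrac{|x|}{\sqrt{2q}}\varrho_x^{2k+1}$ with the correct sign in every quadrant of $(x,y)$ and every sign of $\varrho$. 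Everything else is a direct substitution into already-established identities; once the identity $x^2(1+r_x^2)=2q$ (equivalently $x^2-2\varrho xy+y^2=x^2(1+r_x^2)(1-\varrho^2)$, which is just clearing denominators in the definition of $r_x$) is in hand, the common factor $1/\sqrt{2q}$ factors out of both series cleanly and the arctangent/no-arctangent dichotomy between Theorems~\ref{Theorem2} and~\ref{Theorem3} produces the two displayed forms.
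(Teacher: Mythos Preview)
Your strategy is exactly the paper's: decompose $\Phi_\varrho^2(x,y)$ into two single-argument pieces and apply Theorems~\ref{Theorem2} and~\ref{Theorem3} to each. The paper does this through \eqref{Phirxy2}--\eqref{Phirxy3} rather than through \eqref{Owen2}--\eqref{Owen3}, but the two reductions are equivalent. The paper's entire proof is the two identities you correctly flag as the crux,
\[
   q_x=\tfrac{x^2}{2(1-\varrho_x^2)}=q=q_y
   \qquad\text{and}\qquad
   \sqrt{1-\varrho_x^2}=\tfrac{|x|\sqrt{1-\varrho^2}}{\sqrt{x^2-2\varrho xy+y^2}}=\tfrac{|x|}{\sqrt{2q}},
\]
after which the second line of \eqref{Eq:Theo2a} (resp.\ \eqref{Eq:Theo3a}) plugs in directly. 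Your tangled paragraph actually contains this identity; you just never isolate it, because you keep multiplying by a stray $|x|$.

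There is one genuine slip. Comparing \eqref{Owen3} with \eqref{Phirxy3}, the numerators differ in sign: $r_x$ carries $(y-\varrho x)$ while $\varrho_x$ carries $(\varrho x-y)\sgn(x)$. Hence
\[
   \tfrac{r_x}{\sqrt{1+r_x^2}}=-\varrho_x,\qquad \arctan r_x=-\arcsin\varrho_x,
\]
not $+\varrho_x$ as you write. With your sign, $-T(x,r_x)$ would contribute $-\tfrac{\arcsin\varrho_x}{2\pi}$ and the final formula would come out with the wrong sign on both the $\arcsin$ block and $S$. With the correct sign the minus from $-T$ and the minus in $\arctan r_x=-\arcsin\varrho_x$ cancel, giving \eqref{Eq:Corr2a} as stated. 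This is precisely why the paper works from \eqref{Phirxy2} instead: applying \eqref{Eq:Theo2a} to $\Phi_{\varrho_x}^2(x,0)$ produces $\arcsin\varrho_x$ and $\varrho_x^{2k+1}$ with no sign detour at all. (Your closing remark about the $|\varrho|=1$ sub-case is moot here, since the corollary assumes $\varrho\in(-1,1)$.)
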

%\medskip

\begin{proof}
Since $q_x=\frac{x^2}{2\,(1-\varrho_x^2)}=\frac{x^2-2\varrho xy+y^2}{2\,(1-\varrho^2)}=q$ and $q_y=\frac{y^2}{2\,(1-\varrho_y^2)}=\frac{x^2-2\varrho xy+y^2}{2\,(1-\varrho^2)}=q$, $\sqrt{1-\varrho_{x\phantom{y}}^2}=\frac{\abs{x}\,\sqrt{1-\varrho^2}} {\sqrt{x^2-2\varrho xy+y^2}}=\frac{\abs{x}}{\sqrt{2q}}$ and $\sqrt{1-\varrho_y^2}=\frac{\abs{y}}{\sqrt{2q}}$, \eqref{Phirxy2}, \eqref{Phirxy3}, \eqref{Eq:Theo2a} and \eqref{Eq:Theo3a} imply \eqref{Eq:Corr2a}.
\end{proof}
%\medskip

Note that $\frac{r}{1+r^2}=\frac{\overline{r}}{1+\overline{r}^2}$ and $q=\frac{1}{2}(1+r^2)h^2=\frac{1}{2}(1+\overline{r}^2)\overline{h}^2=\overline{q}$, therefore they are invariant on the transformation of $rh\mapsto\overline{h}$ and $\frac{1}{r}\mapsto\overline{r}$, but $S_n$, $R_n$ and $B_n$ from Theorems \ref{Theorem2} and \ref{Theorem3} are not because $\varrho^2= \frac{r^2}{1+r^2}\mapsto\frac{1}{1+r^2}=\frac{\overline{r}^2}{1+\overline{r}^2}= \overline{\varrho}^2$ and $\varrho^2\ne\overline{\varrho}^2$ if $\abs{r}\ne 1$. They transform to $\overline{S}_n$, $\overline{R}_n$ and $\overline{B}_n$. By transforming the parameters and using \eqref{Owen5}, the equations \eqref{Eq:Theo2b} and \eqref{Eq:Theo3b} can be rewritten to
\begin{equation}\label{Eq:Theo2e}
   \Phi_\varrho^2(h,0)=\tfrac{1}{2}\,\Phi(rh)+\Phi(h)(1-\Phi(rh))- \tfrac{\arctan\overline{r}}{2\pi}+\tfrac{r\,\overline{S}_n}{2\pi\,(1+r^2)}+ \sgn(r)\overline{R}_n-\beta
\end{equation}
and
\begin{equation}\label{Eq:Theo3d}
   \Phi_\varrho^2(h,0)=\tfrac{1}{2}\,\Phi(rh)+\Phi(h)(1-\Phi(rh))- \tfrac{r\,\overline{S}_n}{2\pi\,(1+r^2)}-\sgn(r)\overline{R}_n-\beta
\end{equation}
respectively, where $\beta=0$ if $r\ge 0$ and $\beta=\frac{1}{2}$ if $r<0$.
%\medskip

The transformation makes sense if $\abs{r}>1$, implying $\overline{\varrho}^2<\frac{1}{2}$ and faster convergence. For example, the inequality $0\le R_n\le B_n$ from Theorem \ref{Theorem3} transforms into $0\le\overline{R}_n\le\overline{B}_n$, where the upper bound $\overline{B}_n$ is $r^{2n+4}$-times smaller than a corresponding upper bound $B_n$. The noteworthy cost for faster convergence is the additional computation of $\Phi(rh)$.
%\medskip

\begin{theorem}\label{Theorem4}
If $\abs{r}<\infty$, the Owen's $T$ function \eqref{Owen1} can be written as
\begin{subequations}
   \begin{align}
      T(h,r)
      &=\tfrac{\arctan r}{2\pi}-\tfrac{r}{2\pi\,(1+r^2)}\sum_{k=0}^\infty\tfrac{(2k)!!}{(2k+1)!!}\, P(k+1,q)\left(\tfrac{r^2}{1+r^2}\right)^k\label{Owen6a}\\
      &=\tfrac{r}{2\pi\,(1+r^2)}\sum_{k=0}^\infty\tfrac{(2k)!!}{(2k+1)!!}\, Q(k+1,q)\left(\tfrac{r^2}{1+r^2}\right)^k\label{Owen6b}\\
      &=U(h,r)-\tfrac{\arctan\frac{1}{r}}{2\pi}+\tfrac{r}{2\pi\,(1+r^2)} \sum_{k=0}^\infty\tfrac{(2k)!!}{(2k+1)!!}\,P(k+1,q) \left(\tfrac{1}{1+r^2}\right)^k\label{Owen6c}\\
      &=U(h,r)-\tfrac{r}{2\pi\,(1+r^2)}\sum_{k=0}^\infty\tfrac{(2k)!!}{(2k+1)!!}\, Q(k+1,q)\left(\tfrac{1}{1+r^2}\right)^k,\label{Owen6d}
   \end{align}
\end{subequations}
where $q=\frac{1}{2}(1+r^2)h^2$, $U(h,r)=\frac{1}{2}(\Phi(h)+\Phi(rh))-\Phi(h)\,\Phi(rh)-\beta$, and $\beta=0$ if $r\ge 0$ and $\beta=\frac{1}{2}$ if $r<0$.
\end{theorem}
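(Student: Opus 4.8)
The plan is to deduce \eqref{Owen6a} and \eqref{Owen6b} as essentially direct restatements of Theorems \ref{Theorem2} and \ref{Theorem3}, and then to obtain \eqref{Owen6c} and \eqref{Owen6d} from the first two by means of Owen's reflection identity \eqref{Owen4}.

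First I would use that \eqref{Phirh0a} together with the remark $\varphi(h)\,T_r(-\tfrac12 h^2)=T(h,r)$ gives $\Phi_\varrho^2(h,0)=\tfrac12\Phi(h)+T(h,r)$. Subtracting $\tfrac12\Phi(h)$ from the first line of \eqref{Eq:Theo2a} (recalling $q=\tfrac12(1+r^2)h^2$) yields \eqref{Owen6a}, while subtracting it from the first line of \eqref{Eq:Theo3a} yields \eqref{Owen6b}. The passage from $r\ge0$ to $r<0$ is already handled inside the proof of Theorem \ref{Theorem2} via $T(h,-r)=-T(h,r)$ and $\arctan(-r)=-\arctan r$, so nothing extra is needed there, and the case $r=0$ is trivial since then $T(h,0)=0$ and both series are multiplied by $r$.

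For the remaining two identities I would assume $r\neq0$ and set $\overline{h}=rh$, $\overline{r}=\tfrac1r$, so that \eqref{Owen4} reads $T(h,r)=U(h,r)-T(\overline{h},\overline{r})$. The key observation, already recorded just before the theorem, is the near-invariance of the relevant quantities under this transformation: $\overline{q}:=\tfrac12(1+\overline{r}^2)\overline{h}^2=q$, $\tfrac{\overline{r}}{1+\overline{r}^2}=\tfrac{r}{1+r^2}$ and $\arctan\overline{r}=\arctan\tfrac1r$, while $\tfrac{\overline{r}^2}{1+\overline{r}^2}=\tfrac{1}{1+r^2}$ takes the place of $\tfrac{r^2}{1+r^2}$. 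Substituting \eqref{Owen6a} applied to $T(\overline{h},\overline{r})$ into $T(h,r)=U(h,r)-T(\overline{h},\overline{r})$ then produces \eqref{Owen6c}, and substituting \eqref{Owen6b} applied to $T(\overline{h},\overline{r})$ produces \eqref{Owen6d}; the parameter $\beta$ of \eqref{Owen4} is exactly the one built into the definition of $U(h,r)$, so no extra sign bookkeeping remains.

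I do not expect a real obstacle here: all the analytic content --- convergence of the series, the incomplete-gamma manipulations, the Alzer-type bounds --- has already been absorbed into Theorems \ref{Theorem2} and \ref{Theorem3} and into \eqref{Owen4}. The only points that need care are the transformation invariance of $q$ and of $\tfrac{r}{1+r^2}$, the correct propagation of $\beta$ through $U(h,r)$, and flagging that \eqref{Owen6c} and \eqref{Owen6d} require $r\neq0$ (or must be read as a limit there).
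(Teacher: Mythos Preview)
Your proposal is correct and follows essentially the same route as the paper: derive \eqref{Owen6a} and \eqref{Owen6b} from Theorems \ref{Theorem2} and \ref{Theorem3} via $\Phi_\varrho^2(h,0)=\tfrac12\Phi(h)+T(h,r)$, and then obtain \eqref{Owen6c} and \eqref{Owen6d} from these by applying \eqref{Owen4} with the transformed parameters. Your added remarks on the invariance of $q$ and $\tfrac{r}{1+r^2}$ and on the need for $r\neq0$ in the last two identities are helpful elaborations the paper leaves implicit.
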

\begin{proof}
Using \eqref{Phirh0a} and $\varphi(h)\,T_r(-\frac{1}{2}h^2)=T(h,r)$, the equations \eqref{Eq:Theo2a} and \eqref{Eq:Theo3a} imply \eqref{Owen6a} and \eqref{Owen6b} respectively, which, together with \eqref{Owen4}, imply \eqref{Owen6c} and \eqref{Owen6d} respectively.
\end{proof} 
%\medskip

Note that when truncating the series \eqref{Owen6a} and \eqref{Owen6b}, the truncation error in absolute value can be estimated by \eqref{Eq:Theo2c} and \eqref{Eq:Theo3c} respectively. The same upper bounds with $\overline{r}=\frac{1}{r}$ instead of $r$ also apply to \eqref{Owen6c} and \eqref{Owen6d}.
%\medskip

Because of \eqref{Owen6b} and $0\le Q(k+1,q)\le 1$, the Owen's $T$ function can be seen as a modified Euler's arctangent series \eqref{Euler1} divided by $2\pi$, which is its majorant series.

\section{Recursion and asymptotic speed of convergence}\label{sec:Recursion}

\floatname{algorithm}{Recursion}

Let $\tilde{S}=\frac{r\,S}{2\pi\,(1+r^2)}$ and $\tilde{e}_k=B_k$, where $S$ and $B_k$ relate to Theorem \ref{Theorem3}. Using \eqref{Eq:Theo3a} and \eqref{Eq:Theo1c}, the series $\tilde{S}$ can be calculated by Recursion \ref{Recurs1} (tildes are omitted). Analogously, let $\hat{S}=\frac{r\,S}{2\pi\,(1+r^2)}$ and $\hat{e}_k=B_k$, where $S$ and $B_k$ relate to Theorem \ref{Theorem2}. The series $\hat{S}$ can be calculated by Recursion \ref{Recurs2} (hats are omitted). In practice, both recursions can be easily combined, since only four steps, marked with $\triangleright$, are different.
%\medskip

In both cases, given $\epsilon>0$, the recursion is terminated when the condition $e_k<\epsilon$ is met and $S_k$ is taken as its result. However, if $\epsilon>0$ is too small, or even intentionally negative, the calculation proceeds up to the actual accuracy threshold -- the recursion is terminated when the condition $S_k\le S_{k-1}$ is met. This criterion was used in the calculations presented in the next section.
%\medskip

%%% V originalu je tu T(r,h), kar je napaka
In both recursions, all variables are positive and are bounded upward. If $q\ne 0$, the sequence $\{d_k\}$ strictly increases and, due to \eqref{Alzer2}, converges to $1$. The sequence $\{S_k\}$ is strictly increasing if $r\ne 0$. If $r=0$, the recursion terminates with $S_1=0$. Note that \eqref{Owen1} implies $\abs{T(h,r)}\le\frac{\arctan\,\abs{r}}{2\pi}$. If step $7$ in Recursion \ref{Recurs2} is replaced by $S_0\leftarrow a_0\,(1-d_0)-\frac{\arctan\,\abs{r}}{2\pi}$, the sequence $\{S_k\}$ changes its sign but keeps the direction. However, the modified Recursion \ref{Recurs2} computes $\hat{S}=\frac{r\,S}{2\pi\,(1+r^2)}-\frac{\arctan r}{2\pi}$.
\begin{center}
\begin{minipage}[t]{0.48\textwidth}
\begin{algorithm}[H]
\caption{}\label{Recurs1}
\begin{algorithmic}[1]
   \Require $h,r,\epsilon\in\mathbb{R}$
   \State $p\gets\frac{r^2}{1+r^2}$
   \State $q\gets\frac{1}{2}(1+r^2)h^2$
   \State $g\gets p$ \phantom{$(1-e^{-q})$} \Comment
   \State $a_0\gets\frac{\abs{r}}{2\pi\,(1+r^2)}$
   \State $b_0\gets e^{-q}$
   \State $d_0\gets b$
   \State $S_0\gets a\,d_0$ \phantom{$()$} \Comment
   \State $e_0\gets\frac{\abs{r}\,p}{3\pi}$ \Comment
   \State $k\gets 0$
   \Repeat
      \State $a_{k+1}\gets\frac{(2k+2)\,p\,a_k}{2k+3}$
      \State $b_{k+1}\gets\frac{q\,b_k}{k+1}$
      \State $d_{k+1}\gets d_k+b_{k+1}$
      \State $S_{k+1}\gets S_k+a_{k+1}\,d_{k+1}$ \phantom{$()$} \Comment
      \State $e_{k+1}\gets\frac{(2k+4)\,g\,e_k}{2k+5}$
      \State $k\gets k+1$
   \Until{$e_k<\epsilon\textbf{ or }S_k\le S_{k-1}$}
   \State $S_k\gets\sgn(r)\,S_k$
\end{algorithmic}
\end{algorithm}
\end{minipage}\hspace{0.03\textwidth}
\begin{minipage}[t]{0.48\textwidth}
\begin{algorithm}[H]
\caption{}\label{Recurs2}
\begin{algorithmic}[1]
   \Require $h,r,\epsilon\in\mathbb{R}$
   \State $p\gets\frac{r^2}{1+r^2}$
   \State $q\gets\frac{1}{2}(1+r^2)h^2$
   \State $g\gets(1-e^{-q})\,p$
   \State $a_0\gets\frac{\abs{r}}{2\pi\,(1+r^2)}$
   \State $b_0\gets e^{-q}$
   \State $d_0\gets b$
   \State $S_0\gets a\,(1-d_0)$
   \State $e_0\gets\frac{\abs{r}\,(1-d_0)\,g}{3\pi}$
   \State $k\gets 0$
   \Repeat
      \State $a_{k+1}\gets\frac{(2k+2)\,p\,a_k}{2k+3}$
      \State $b_{k+1}\gets\frac{q\,b_k}{k+1}$
      \State $d_{k+1}\gets d_k+b_{k+1}$
      \State $S_{k+1}\gets S_k+a_{k+1}\,(1-d_{k+1})$
      \State $e_{k+1}\gets\frac{(2k+4)\,g\,e_k}{2k+5}$
      \State $k\gets k+1$
   \Until{$e_k<\epsilon\textbf{ or }S_k\le S_{k-1}$}
   \State $S_k\gets\sgn(r)\,S_k$
\end{algorithmic}
\end{algorithm}
\end{minipage}
\end{center}
%\medskip

Let $\tilde{S}_n$ and $\hat{S}_n$ be the results of Recursion \ref{Recurs1} and modified Recursion \ref{Recurs2} respectively, calculated with the same $h$ and $r$ if $\abs{r}\le 1$, or with $\overline{h}$ and $\overline{r}$ if $\abs{r}>1$. Let $\tilde{\Phi}_\varrho^2(h,0)$ be calculated by \eqref{Eq:Theo3b} or \eqref{Eq:Theo3d}, where unknown remainders are ignored. Analogously, let $\hat{\Phi}_\varrho^2(h,0)$ be calculated by \eqref{Eq:Theo2b} or \eqref{Eq:Theo2e}, where also $\frac{\arctan r}{2\pi}$ is ignored. If $\abs{r}\le 1$, then $\tilde{\Phi}_\varrho^2(h,0)\le\Phi_\varrho^2(h,0)\le\hat{\Phi}_\varrho^2(h,0)$ if $r\ge 0$ and $\tilde{\Phi}_\varrho^2(h,0)\ge\Phi_\varrho^2(h,0)\ge\hat{\Phi}_\varrho^2(h,0)$ if $r<0$. Even if $\abs{r}>1$, one of $\tilde{\Phi}_\varrho^2(h,0)$ and $\hat{\Phi}_\varrho^2(h,0)$ underestimates and the other overestimates $\Phi_\varrho^2(h,0)$. Consequently, the upper bounds \eqref{Eq:Theo2c} and \eqref{Eq:Theo3c} are not needed if $\tilde{S}$ and modified $\hat{S}$ are calculated in the same recursion which terminates when $\abs{\hat{S}_k-\tilde{S}_k}$ become small enough. Upper bounds are also not needed if in Recursions \ref{Recurs1} and \ref{Recurs2} step $17$ is replaced so that $S_k\le S_{k-1}$ is the only termination condition. In this case, steps $3$, $8$ and $15$ are unnecessary.
%\medskip

Given $q$, which one of the series \eqref{Eq:Theo2a} and \eqref{Eq:Theo3a} converges faster? Since $P(k+1,x)$ is a cdf of the gamma distribution with the median $\nu\in(k+\frac{2}{3},k+1)$, it follows $P(k+1,q)\le Q(k+1,q)$ if $q\le\nu<k+1$. The answer depends on the number of considered terms, which depends on the prescribed accuracy. On this basis, it seems that series \eqref{Eq:Theo2a} is more suitable than \eqref{Eq:Theo3a} for ``small'' $q$ and vice versa for ``large'' $q$. Tables \ref{Table:1} and \ref{Table:4} in the next section show how this is reflected in practice.
%\medskip

Gautschi's inequality $x^{1-s}<\frac{\Gamma(x+1)}{\Gamma(x+s)}<(x+1)^{1-s}$ \citep[p.~138, 5.6.4]{Olver2010} with $s=\frac{1}{2}$ implies $\frac{\sqrt{k\,\pi}}{2k+1}< \frac{(2k)!!}{(2k+1)!!}<\frac{\sqrt{(k+1)\,\pi}}{2k+1}$. Combining it with \eqref{Mitrinovic}, sending $k\to\infty$ and neglecting non-essential constants imply that the series \eqref{Eq:Theo2a} asymptotically converges as $\sum_k\frac{q^{k+1}e^{-q}}{(k+1)!}\,\frac{\varrho^{2k}}{\sqrt{k+1}}$. Assuming calculation with $\overline{h}$, $\overline{r}$ and \eqref{Eq:Theo2e} instead of $h$, $r$ and \eqref{Eq:Theo2b} respectively if $\abs{r}>1$, hence $\varrho^2>\frac{1}{2}$, and recalling that $q$ is invariant on the transformation of $rh\mapsto\overline{h}$ and $\frac{1}{r}\mapsto\overline{r}$, we get an asymptotic convergence as $\sum_k\frac{\overline{q}^{k+1} e^{-\overline{q}}}{(k+1)!}\,\frac{\overline{\varrho}^{2k}}{\sqrt{k+1}}= \sum_k\frac{q^{k+1}e^{-q}}{(k+1)!}\,\frac{(1-\varrho^2)^k}{\sqrt{k+1}}$. The series \eqref{Eq:Theo2a} or its corresponding transformed version, unwritten and based on \eqref{Owen6c}, asymptotically converges as $\sum_k\frac{q^{k+1}e^{-q}}{(k+1)!}\,\frac{(\min\{\varrho^2,1-\varrho^2\})^k} {\sqrt{k+1}}=\sum_k\frac{q^{k+1}e^{-q}}{(k+1)!}\, \frac{\min\{r^{2k},1\}}{\sqrt{k+1}\;(1+r^2)^k}$, where $q=\frac{1}{2}(1+r^2)h^2=\frac{h^2}{2\,(1-\varrho^2)}$. On the other hand, since \eqref{Alzer2} implies $\lim_{k\to\infty}Q(k+1,x)=1$, the series \eqref{Eq:Theo3a} or its corresponding transformed version, unwritten and based on \eqref{Owen6d}, asymptotically converges as $\sum_k\frac{(\min\{\varrho^2,1-\varrho^2\})^k}{\sqrt{k+1}}= \sum_k\frac{\min\{r^{2k},1\}}{\sqrt{k+1}\;(1+r^2)^k}$, hence in the worst case as $\sum_k\frac{1}{2^k\,\sqrt{k+1}}$.
%\medskip

Based on both asymptotic convergences, comparing \eqref{Eq:Theo3a} to \eqref{Eq:Theo2a} would be unfair because the calculation of $\arctan\,\abs{r}$ should also be taken into account for \eqref{Eq:Theo2a}. Since it is implicitly embedded in Recursion \ref{Recurs1}, it is calculated on the fly at the cost of worse speed of convergence. Each of the recursions is executable with the basic four arithmetic operations, two calls of elementary functions (if the calculation of $r$ from $\varrho$ is also considered), and $\arctan\abs{r}$ in the modified Recursion \ref{Recurs2}. However, the computational cost of $\Phi(h)$ (and $\Phi(rh)$ if needed) should also be taken in account, but it is the same in \eqref{Eq:Theo2a} and \eqref{Eq:Theo3a}.

\section{Numeric issues and testing results}\label{sec:Results}

A common cause of numerical instability, where two absolutely large values give an absolutely small result when added or subtracted, cannot occur in Recursions \ref{Recurs1} and \ref{Recurs2}. The calculation by them is stable, but something may happen that needs to be pointed out. The side effect of transforming $rh\mapsto\overline{h}$ and $\frac{1}{r}\mapsto\overline{r}$ is that $\abs{\overline{h}}$ can become unusually large. When computing $\Phi(h)$, according to \citet[p.~2]{Marsaglia2004}, ``[...] the region of interest may not be the statistician's $0 < x < 5$, say, but rather $10 < x < 14$ [...]''. Assume the same needs when computing $\Phi_\varrho^2(h,0)$, and using transformed parameters if $\varrho^2>\frac{1}{2}$. If $h=14$ and $\varrho=\frac{10}{\sqrt{1+10^2}}\approx 0.995$, then $r\approx 10$, $\overline{r}\approx 0.1$ and $\overline{h}\approx 140$, implying that the region of interest could be $\abs{h}<140$. Even the usual $\abs{h}<5$ transformed to $\abs{\overline{h}}<50$ is well beyond the established limits.
%\medskip

The ``problem'' of too large $\abs{h}$ is mentioned because we have to be aware of it if we are going to perform high-precision computation. In any case, the recursion must be complemented by a suitable computation of $\Phi(h)$ and $\arctan r$ if needed. As an example that not all are like that, the computation by the well known series $\Phi(x)=\frac{1}{2}+\frac{1}{\sqrt{2\pi}}\sum_{k=0}^\infty \frac{(-1)^k\,x^{2k+1}}{2^k k!\,(2k+1)}$ \citep[p.~932, 26.2.10]{Abramowitz1972} and the standard double precision arithmetic is numerically unstable even for moderately large $\abs{h}$, e.g.\ using the programming language \proglang{R} \citep{R2023}, for $h=10$ we get a completely wrong result, approximately $-1683$. A similar problem can lead to a significant decrease in accuracy if $T(h,r)$ is calculated with \eqref{Owen1}, as will be seen in Subsection \ref{subsec:Results1}.
%\medskip

In double precision arithmetic, the minimal positive non-zero number is $\eta\approx 2.23\cdot 10^{-308}$. Assuming $b_0=e^{-q}\ge\eta$, it follows $q_{max}=-\log(\eta)\approx 708.40$ and $h_{max}(r)=\sqrt{-\,\frac{2\log(\eta)}{1+r^2}}$. If $\abs{r}\le 1$, then $h_{max}(r)$ is between $h_{max}(1)\approx 26.62$ and $h_{max}(0)\approx 37.64$. If $e^{-q}<\eta$, then all elements of the sequence $\{b_k\}$ are zero. Consequently, all elements of $\{d_k\}$ are one and all elements of $\{S_k\}$ in Recursion \ref{Recurs2} are zero.
%\medskip

From the integral in \eqref{Owen1}, it follows $\abs{T(h,r)}\le\varphi(h)\,\frac{\Phi(\abs{r}\abs{h})-\frac{1}{2}}{\abs{h}}\le \frac{\varphi(h)}{2\,\abs{h}}$ if $h\ne 0$, where the upper bound is a decreasing function of $\abs{h}$. If $e^{-q}<\eta$ and $\abs{r}\le 1$, then $\abs{h}>h_{max}(1)$ and $\abs{T(h,r)}\le\frac{\varphi(h_{max}(1))}{2\,h_{max}(1)}\approx 1.12\cdot 10^{-156}$. Analogously to the case when $r=0$, the recursion terminates with $S_1=0$, which is practically a correct result for $T(h,r)$, as well as $\Phi_\varrho^2(h,0)=\frac{1}{2}$ if $h>0$, and $\Phi_\varrho^2(h,0)=0$ if $h<0$.
%\medskip

The Recursions \ref{Recurs1} and \ref{Recurs2} with adjustment of their results according to \eqref{Owen6a}--\eqref{Owen6d} are implemented in the package \pkg{Phi2rho} \citep{Komelj2023a}. It contains the functions \code{OwenT()}, hereafter \code{Phi2rho::OwenT()}, and \code{Phi2xy()}, which compute $T(h,r)$ and $\Phi_\varrho^2(x,y)$ respectively. They also enable computation with the tetrachoric and Vasicek's series, original or accelerated. The latter means that $h$ and $r$ are transformed if $r>1$ and $r<1$ for the tetrachoric and Vasicek's series respectively, and the equation \eqref{Owen5} is used, so \eqref{TetrachC} instead of \eqref{TetrachB} in the case of tetrachoric series. For the sake of comparison, all series were considered during testing in Subsection \ref{subsec:Results1}. In Subsection \ref{subsec:Results2}, the original tetrachoric and Vasicek's series were excluded because they converge too slowly for unfavorable $\varrho$.
%\medskip

%%% V originalu je tu T(r,h), kar je napaka
During testing, the values of $T(h,r)$ and $\Phi_\varrho^2(x,y)$ were computed for a selected and randomly generated set of parameters. Reference values were computed using the accelerated tetrachoric series and quadruple ($128$-bit) precision. Absolute differences between tested and reference values are declared as absolute errors. The maximum absolute error is considered a measure of accuracy. Since the mean values and some quantiles are also important, they are shown in the tables in Subsection \ref{subsec:Results2}. An integral part of the testing was also a comparison of the results obtained with the \pkg{Phi2rho} package and competing packages available on CRAN.

\subsection{Numeric issues and results when computing $T(h,r)$}\label{subsec:Results1}

Numerical calculations of $T(h,r)$ were performed on $39,999$ grid points $(h,r)$, where $h$ ranges from $-10$ to $10$ in increments of $0.1$, $r=\frac{\varrho}{\sqrt{1-\varrho^2}}$ and $\varrho$ ranges from $-0.99$ to $0.99$ in increments of $0.01$. Since for $\varrho=\pm 0.99$ we get $r\approx\pm 7$, the actual range of $h$ for which $\Phi(h)$ must be computed is approximately from $-70$ to $70$, so well beyond the $h_{max}(0)\approx 37.64$.
%\medskip

Note \emph{atanExt = no} in tables means that Recursion \ref{Recurs1} was executed, hence \eqref{Owen6b} was used if $\abs{r}\le 1$ and \eqref{Owen6d} if $\abs{r}>1$, and \emph{atanExt = yes} means that Recursion \ref{Recurs2} was executed, hence \eqref{Owen6a} was used if $\abs{r}\le 1$ and \eqref{Owen6c} if $\abs{r}>1$. Both recursions are well suited for vectors, so each of the parameters $h$ and $r$ can be a scalar or a vector. If they are both vectors, they must be of the same length, if one of $h$ and $r$ is a vector and the other is a scalar, the latter is replicated to the length of the former. Since the number of iterations is determined by the ``worst'' component, we would expect execution to be faster if function calls are done in a loop for each component of the vector separately, but usually the run time is significantly increased due to the overhead of the loop, more initialization, etc.
%\medskip

For the consistency test, $T(h,r)$ was calculated on the test grid with $h$ and $r$ both scalars, one a scalar and the other a vector. For all three calculations with Recursion \ref{Recurs1}, the results matched perfectly, as well as for all three with Recursion \ref{Recurs2}. The maximum absolute difference between calculations with Recursions \ref{Recurs1} and \ref{Recurs2} was approximately $9.49\cdot 10^{-17}$.
%\medskip

When testing accuracy, the tetrachoric and Vasicek's series, original and accelerated, were also tested, using $T(h,r)=\Phi_\varrho^2(h,0)-\frac{1}{2}\Phi(h)$. By transforming the parameters, if appropriate, both comparison methods gained a lot, as can be concluded from Table \ref{Table:1}. The absolute errors, the maximum values of which are collected in this table, were calculated against the reference values computed with the accelerated tetrachoric series and $128$-bit precision.
\begin{table}[h]
{\footnotesize
\caption{Double precision computation of $T(h,r)=\Phi_\varrho^2(h,0)-\frac{1}{2}\Phi(h)$ on the test grid\label{Table:1}} 
\begin{center}
\begin{tabular}{lrrcrrcrr}
\toprule
\multicolumn{1}{l}{}&\multicolumn{2}{c}{\textbf{Tetrachoric}}&\multicolumn{1}{c}{}& \multicolumn{2}{c}{\textbf{Vasicek}}&\multicolumn{1}{c}{}&\multicolumn{2}{c}{\textbf{Novel series}}\tabularnewline
\cmidrule(lr){2-3} \cmidrule(lr){5-6} \cmidrule(lr){8-9}
\multicolumn{1}{l}{}&\multicolumn{2}{c}{accelerated}&\multicolumn{1}{c}{}& \multicolumn{2}{c}{accelerated}&\multicolumn{1}{c}{}&\multicolumn{2}{c}{atanExt}\tabularnewline
\multicolumn{1}{l}{}&\multicolumn{1}{c}{no}&\multicolumn{1}{c}{yes}&\multicolumn{1}{c}{}&\multicolumn{1}{c}{no}&\multicolumn{1}{c}{yes}&\multicolumn{1}{c}{}&\multicolumn{1}{c}{no}&\multicolumn{1}{c}{yes}\tabularnewline
\midrule
{\bfseries $h$ is a vector}&&&&&&&&\tabularnewline
~~Max abs. error\phantom{ab}&9.42e-16&1.13e-16&&9.37e-14&1.68e-16&&9.68e-17&1.43e-16\tabularnewline
~~Average iter. 1\phantom{ab}&123.2&47.2&&3,575.4&65.3&&66.9&22.8\tabularnewline
\midrule
{\bfseries $r$ is a vector}&&&&&&&&\tabularnewline
~~Max abs. error \phantom{ba}&4.77e-15&1.13e-16&&9.37e-14&1.68e-16&&9.68e-17&1.43e-16\tabularnewline
~~Average iter. 1 \phantom{ba}&1,601.3&65.2&&188,834.8&66.6&&75.5&36.4\tabularnewline
\midrule
{\bfseries for both}&&&&&&&&\tabularnewline
~~Average iter. 2 \phantom{ba}&84.2&29.8&&3,355.8&31.2&&38.5&18.6\tabularnewline
~~Maximum iter. \phantom{ba}&2,447&113&&199,790&141&&125&50\tabularnewline
\bottomrule
\end{tabular}
\end{center}}
\end{table}
Since the vector computations were performed, there are two average numbers of iterations. The first is empirical because the iterations were terminated according to the ``worst'' component of the vector, and the second is hypothetical, if the computations were performed for each component separately. Since all iterations ran to the accuracy limit and with an unbounded number of iterations, the original Vasicek's series needed the enormous number of iterations, not only for the worse case, but also in average, when calculating with scalar $h$ and vector $r$. However, it is not intended for cases with $\varrho^2<\frac{1}{2}$, and its error can be well estimated, which was ignored here.
%\medskip

%%% V originalu je tu nepotreben sklic \citep{Yee2023}
Comparative computations with competing packages were also performed on the test grid, using $T(h,r)=\Phi_\varrho^2(h,0)-\frac{1}{2}\Phi(h)$. The \code{pbinorm()} function from the \pkg{VGAM} package, i.e.\ using \eqref{Owen1}, had an average and maximum absolute error of $1.12\cdot 10^{-8}$ and $3.62\cdot 10^{-7}$ respectively. Similar maximum absolute errors were obtained by the function \code{pbvnorm()} from the \pkg{pbv} package \citep{Robitzsch2020}, which is based on \citep{Drezner1990}, and \code{pmvnorm()} from the \pkg{mvtnorm} package, but only with the parameter \code{algorithm = Miwa}. 

\subsection{Numeric issues and results when computing \texorpdfstring{$\Phi_\varrho^2(x,y)$}{Phi2(x,y)}}\label{subsec:Results2}

In all $\Phi_\varrho^2(x,y)$ computational methods based on \eqref{Phirxy2} and \eqref{Phirxy3}, or \eqref{Owen2} and \eqref{Owen3}, exceptional cases with $\abs{\varrho}=1$, $x\ne 0$ and $x-y\,\sgn(\varrho)=0$ are not problematic because \eqref{Phi1xy} can be used, however, nearby cases are their weak point. Recalling that $q_x=q_y=\frac{x^2-2\varrho xy+y^2}{2\,(1-\varrho^2)}=q$ and using \eqref{Phirxy4} and \eqref{phirxy}, it follows $\frac{\partial}{\partial\varrho}\, \Phi_\varrho^2(x,y)=\frac{e^{-q}}{2\pi\,\sqrt{1-\varrho^2}}$ and $\lim_{\abs{\varrho}\to 1}\frac{\partial}{\partial\varrho}\,\Phi_\varrho^2(x,y)=0$, except if $x-y\,\sgn(\varrho)=0$. If $\abs{\varrho}$ is close to $1$ and $x-y\,\sgn(\varrho)\simeq 0$, then $x^2-2\varrho xy+y^2\simeq 0$ and $q$ may be small enough that $\frac{\partial}{\partial\varrho}\,\Phi_\varrho^2(x,y)$ is moderate or even large. As a result, even a small rounding error in the calculation of key variables that depend on $\varrho$ can cause a much larger error in the result. The calculation of $r_x$ and $r_y$ with \eqref{Owen3} is particularly problematic. In a similar situation, \citet[p.~5]{Meyer2013b} recommends an equivalent but less sensitive calculation by $r_x=\frac{x-y}{x\,\sqrt{1-\varrho^2}}- \sqrt{\frac{1-\varrho}{1+\varrho}}$ if $\varrho\approx 1$ and $r_x=\frac{x+y}{x\,\sqrt{1-\varrho^2}}-\sqrt{\frac{1+\varrho}{1-\varrho}}$ if $\varrho\approx -1$, and analogously for $r_y$. In our case, his proposal improves accuracy, but does not solve the problem sufficiently.
%\medskip

The problem could be solved by a hybrid computation along the lines of some other methods, e.g.\ the series from \citep[pp.~242--245]{Fayed2014a} could be used if $\abs{\varrho}\approx 1$ and $x-y\,\sgn(\varrho)\simeq 0$. Using the initial idea to develop these series, i.e.\ axis rotation in a way that removes the cross product term $-2\varrho st$ in \eqref{Phirxy1}, by setting $s=\frac{u+v}{\sqrt{2}}$ and $t=\frac{u-v}{\sqrt{2}}$ the equation \eqref{Phirxy1} transforms to
\begin{equation*}
   \begin{split}
      \Phi_\varrho^2(x,y)
      &=\tfrac{1}{2\pi\,\sqrt{1-\varrho^2}}\int_{-\infty}^\frac{x-y}{\sqrt{2}} e^{-\frac{u^2}{2\,(1-\varrho)}}\left(\int_{-\infty}^{u+\sqrt{2}\,y} e^{-\frac{v^2}{2\,(1+\varrho)}}\, \mathrm{d}v\right)\mathrm{d}u\\
      &+\tfrac{1}{2\pi\,\sqrt{1-\varrho^2}}\int_\frac{x-y}{\sqrt{2}}^\infty e^{-\frac{u^2}{2\,(1-\varrho)}}\left(\int_{-\infty}^{-u+\sqrt{2}\,x} e^{-\frac{v^2}{2\,(1+\varrho)}}\, \mathrm{d}v\right)\mathrm{d}u
   \end{split}
\end{equation*}
and, using $\frac{u}{\sqrt{1-\varrho}}=\tilde{u}$ and $\frac{v}{\sqrt{1+\varrho}}=\tilde{v}$, to
\begin{equation*}
   \begin{split}
      \Phi_\varrho^2(x,y)&=\int_{-\infty}^\frac{x-y}{\sqrt{2\, (1-\varrho)}}\varphi(\tilde{u})\,\Phi \left(\tilde{u}\sqrt{\tfrac{1-\varrho}{1+\varrho}}+ y\sqrt{\tfrac{2}{1+\varrho}}\,\right) \mathrm{d}\tilde{u}\\
      &+\int_\frac{x-y}{\sqrt{2\,(1-\varrho)}}^\infty\varphi(\tilde{u})\, \Phi\left(-\tilde{u}\sqrt{\tfrac{1-\varrho}{1+\varrho}}+ x\sqrt{\tfrac{2}{1+\varrho}}\,\right) \mathrm{d}\tilde{u}.
   \end{split}
\end{equation*}
By transforming $\tilde{u}\mapsto-\tilde{u}$ in the second integral and using \citep[p.~402, 10,010.1]{Owen1980}, it follows $\Phi_\varrho^2(x,y)=\Phi_{\varrho^*}^2\Big(\frac{x-y}{\sqrt{2\,(1-\varrho)}},y\Big)+ \Phi_{\varrho^*}^2\Big(-\frac{x-y}{\sqrt{2\,(1-\varrho)}},x\Big)$, where $\varrho^*=-\,\sqrt{\frac{1-\varrho}{2}}$. This equation is suitable for $\varrho\approx 1$. From it and $\Phi_\varrho^2(x,y)=\Phi(x)-\Phi_{-\varrho}^2(x,-y)$ we obtain the equation 
\begin{equation*}
   \Phi_\varrho^2(x,y)=\Phi(x)-\Phi_{\hat{\varrho}}^2\left(\tfrac{x+y} {\sqrt{2\,(1+\varrho)}},-y\right)-\Phi_{\hat{\varrho}}^2 \left(-\tfrac{x+y}{\sqrt{2\,(1+\varrho)}},x\right),
\end{equation*}
where $\hat{\varrho}=-\,\sqrt{\frac{1+\varrho}{2}}$, that is suitable for $\varrho\approx-1$. Both are combined in
\begin{equation}\label{Phirxy6}
   \Phi_\varrho^2(x,y)=\tfrac{1-\sgn(\varrho)}{2}\,\Phi(x)+\sgn(\varrho)\left(
   \Phi_{\tilde{\varrho}}^2(z,\tilde{y})+\Phi_{\tilde{\varrho}}^2(-z,x)\right),
\end{equation}
where $\tilde{\varrho}=-\,\sqrt{\frac{1-\abs{\varrho}}{2}}$, $\tilde{y}=y\,\sgn(\varrho)$ and $z=\frac{x-\tilde{y}}{\sqrt{2\,(1-\abs{\varrho})}}$. Noting that
\begin{equation*}
   \tilde{q}=\tfrac{\tilde{y}^2-2\tilde{\varrho}\tilde{y}z+z^2} {2\,(1-\tilde{\varrho}^2)}= \tfrac{x^2+2\tilde{\varrho}xz+z^2}{2\,(1-\tilde{\varrho}^2)}=\tfrac{x^2-2\varrho xy+y^2}{2\,(1-\varrho^2)}=q,
\end{equation*}
hence $\varphi_{\tilde{\varrho}}^2(z,\tilde{y})=\varphi_{\tilde{\varrho}}^2(-z,x)= \frac{e^{-q}}{\pi\,\sqrt{2\,(1+\abs{\varrho}})}$, we stop here because one potentially critical case is replaced with two non-critical ones.
%\medskip

If we continued to simplify the right side of \eqref{Phirxy6} by \eqref{Owen2}, we would get \eqref{Owen2}. This actually happens when calculating with the functions used in tests, but it turns out that the results are more accurate than if we start with \eqref{Owen2} if $\abs{\varrho}\approx 1$ and $x-y\,\sgn(\varrho)\simeq 0$. Of course, the equation \eqref{Phirxy6} could be used as a starting point instead of \eqref{Owen2} in all cases, but this is not rational because it requires $4$ calculations of the $T$ function instead of $2$, two of which cancel each other out, at least in theory, and as a result, in non-critical cases the error may be larger than when starting with \eqref{Owen2}. When testing, \eqref{Phirxy6} was used as a starting point instead of \eqref{Owen2} if $\varphi_\varrho^2(x,y)>1$. However, since $\min\left\{\abs{\varrho},\sqrt{\frac{1-\abs{\varrho}}{2}}\right\}\le\frac{1}{2}$, using \eqref{Phirxy6} may also make sense in some other circumstances, e.g.\ when computing directly with \eqref{TetrachA}.
%\medskip

In this case, testing was performed on the randomly generated set of parameters. After setting the random generator seed to $123$ to allow replication and independent verification, one million $x$ values, one million $y$ values, and one million $\varrho$ values were drawn from a uniform distribution on the intervals $(-10,10)$, $(-10,10)$, and $(-1,1)$ respectively.
%\medskip

For each triplet $(x,y,\varrho)$, auxiliary values
$q=\frac{x^2-2\varrho xy+y^2}{2\,(1-\varrho^2)}$ and $\varphi_\varrho^2(x,y)=\frac{e^{-q}}{2\pi\,\sqrt{1-\varrho^2}}$ were calculated. For the new series, the tested $\Phi_\varrho^2(x,y)$ values were computed using \eqref{Phirxy6} as a branch point and \eqref{Owen2} for both branches if $\varphi_\varrho^2(x,y)>1$, and using \eqref{Owen2} directly otherwise. The same applies for the tetrachoric and Vasicek's series, only \eqref{Phirxy2} must be used instead of \eqref{Owen2}. However, the described procedure is built in the \code{Phi2xy()} function. The tested $\Phi_\varrho^2(x,y)$ values needed for the preparation of Table \ref{Table:2} were computed with:
\begin{quote}
\begin{compactitem}
   \item[row 1:] \code{Phi2xy( x, y, rho, fun = "tOwenT" )};
   \item[row 2:] \code{Phi2xy( x, y, rho, fun = "vOwenT" )};
   \item[row 3:] \code{Phi2xy( x, y, rho, fun = "mOwenT", opt = FALSE )};
   \item[row 4:] \code{Phi2xy( x, y, rho, fun = "mOwenT", opt = TRUE )}.
\end{compactitem}
\end{quote}
The absolute errors, which statistics are collected in Table \ref{Table:2}, were calculated against reference values computed with the accelerated tetrachoric series as described, but with $128$-bit precision. From a user's point of view, only the parameters $x$, $y$ and $\varrho$ needed to be converted into $128$-bit ``mpfr-numbers'' before calling \code{Phi2xy( x, y, rho, fun = "tOwenT" )}, hence \mbox{\code{x <- mpfr( x, precBits = 128 )}} and analogously for $y$ and $\varrho$. In this case, the computation took significantly more time, since all intermediate and final results were $128$-bit ``mpfr-numbers''.
\begin{table}[h]
{\footnotesize
\caption{$\Phi_\varrho^2(x,y)$ computation -- absolute error comparison\label{Table:2}} 
\begin{center}
\begin{tabular}{lrrrrr}
\toprule
\multicolumn{1}{l}{Method}&\multicolumn{1}{c}{Median}&\multicolumn{1}{c}{Mean}&\multicolumn{1}{c}{3rd Qu.}&\multicolumn{1}{c}{99\%}&\multicolumn{1}{c}{Max.}\tabularnewline
\midrule
Tetrachoric series (accelerated)&2,86e-17&3,68e-17&5,37e-17&1,51e-16&2,77e-16\tabularnewline
Vasicek's series (accelerated)&2,57e-17&3,78e-17&5,90e-17&1,65e-16&3,38e-16\tabularnewline
Novel series (atanExt = no)&2,86e-17&3,68e-17&5,38e-17&1,51e-16&2,77e-16\tabularnewline
Novel series (atanExt = yes)&3,36e-17&4,20e-17&6,11e-17&1,56e-16&3,45e-16\tabularnewline
\bottomrule
\end{tabular}
\end{center}}
\end{table}
%\medskip

In this test, $\varrho_{min}\approx-0.99999991$, $\varrho_{max}\approx 0.99999775$, there are $5$ cases with $\varphi_\varrho^2(x,y)>1$ and $\max\varphi_\varrho^2(x,y)\approx 1.38$.
%\medskip

In order to better check the accuracy if $\abs{\varrho}$ is close to $1$, the computation was repeated with the same $x$ and $y$, and $\varrho^*=2\,\Phi(8\varrho)-1$. In this case, more than half of $\abs{\varrho^*}$ are greater than $0.9999$ and none is equal to $1$. The results are in Table \ref{Table:3}.
\begin{table}[h]
{\footnotesize
\caption{$\Phi_{\varrho^*}^2(x,y)$ computation -- absolute error comparison\label{Table:3}} 
\begin{center}
\begin{tabular}{lrrrrr}
\toprule
\multicolumn{1}{l}{Method}&\multicolumn{1}{c}{Median}&\multicolumn{1}{c}{Mean}&\multicolumn{1}{c}{3rd Qu.}&\multicolumn{1}{c}{99\%}&\multicolumn{1}{c}{Max.}\tabularnewline
\midrule
Tetrachoric series (accelerated)&2,92e-17&3,83e-17&5,55e-17&1,60e-16&2,79e-16\tabularnewline
Vasicek's series (accelerated)&1,72e-17&3,38e-17&5,38e-17&1,63e-16&3,01e-16\tabularnewline
Novel series (atanExt = no)&2,92e-17&3,83e-17&5,55e-17&1,60e-16&2,79e-16\tabularnewline
Novel series (atanExt = yes)&3,01e-17&3,91e-17&5,55e-17&1,56e-16&2,95e-16\tabularnewline
\bottomrule
\end{tabular}
\end{center}}
\end{table}
In this test, there are $178$ cases with $\varphi_\varrho^2(x,y)>1$ and $\max\varphi_\varrho^2(x,y)\approx 376$. For both novel series in Table \ref{Table:3}, the maximum absolute error is $1.54\cdot 10^{-14}$ if \eqref{Phirxy6} is not used, improving to $1.21\cdot 10^{-15}$ if \citeauthor{Meyer2013b}'s advice is followed. Similarly applies to the other two series.
%\medskip

Due to the time-consuming $128$-bit precision computation, the presented tests are the only performed tests based on the $128$-bit precision reference values. However, additional tests were made by comparing the results of \code{Phi2xy()} with the results obtained by other functions from packages available on CRAN. For three functions, the accuracy was found to be worse than $2\cdot 10^{-7}$ as measured by the maximum absolute error on the test grid from Subsection \ref{subsec:Results1}. Regarding accuracy, only the function \code{pmvnorm()} with the parameter \code{algorithm = TVPACK} from the \pkg{mvtnorm} package proved to be equivalent to \code{Phi2xy()} and even slightly more accurate with the maximum absolute errors $2.58\cdot 10^{-16}$ and $2.19\cdot 10^{-16}$ on the test sets of triplets $(x,y,\varrho)$ and $(x,y,\varrho^*)$ respectively. The same function with \code{algorithm = GenzBretz} achieved $2.58\cdot 10^{-16}$ and $2.20\cdot 10^{-11}$.
%\medskip

The \code{OwenQ::OwenT()} function also proved to be equivalent to \code{Phi2xy()} as measured by the maximum absolute error on the test sets of triplets, but only when upgraded to compute $\Phi_\varrho^2(x,y)$ in the manner used in \code{Phi2xy()}. This function and \code{Phi2rho::OwenT()} are essentially wrappers for the internal functions \code{tOwenT()}, \code{vOwenT()} and \code{mOwenT()}, which as work\-horses compute series.
%\medskip

%%% V originalu je (44) namesto (41), kar je napaka
In terms of reliability, stability and accuracy of the new methods, no problems or significant differences according to the parameters from the different data sets were detected, except those already described and related to $\abs{\varrho}\approx 1$ and $x-y\,\sgn(\varrho)\simeq 0$. To eliminate them, the equation \eqref{Phirxy6} was derived, which also proved to be successful when using the \code{OwenQ::OwenT()} function, upgraded to compute $\Phi_\varrho^2(x,y)$. Regarding parameters from different data sets, the only detected big difference is in the number of iterations, as can be seen from Figure \ref{Figure:1} in the next subsection.
%\medskip

Two million $\Phi_\varrho^2(x,y)$ computations on Windows 10 and 64-bit \proglang{R} 4.3.1 on the old Intel i7-6500U CPU @ 2.50 GHz and 8 GB RAM, using only one thread of four, lasted $26$, $31$ and $286$ seconds for \code{Phi2xy()}, upgraded \code{OwenQ::OwenT()} and \code{pmvnorm()} with \code{algorithm = TVPACK} respectively. The $128$-bit precision reference values computation took over $17$ hours, compared to $43$ seconds for the double precision one. Based on Tables \ref{Table:1} and \ref{Table:4}, we can conclude that the increased number of iterations has only an insignificant effect on the huge time extension factor.

\FloatBarrier

\subsection{High-precision computation}

All functions in the \code{Phi2rho} package are ready to use the \pkg{Rmpfr} package, which enables using arbitrary precision numbers instead of double precision ones and provides all the high-precision functions needed. It interfaces \proglang{R} to the widely used \proglang{C} Library \code{MPFR} \citep{Fousse2007}. Assuming that the \pkg{Rmpfr} package is loaded, the functions should only be called with the parameters, which are ``mpfr-numbers'' of the same precision. All $128$-bit precision benchmark values used in Subsections \ref{subsec:Results1} and \ref{subsec:Results2} were calculated using \pkg{Rmpfr}.
%\medskip

To get a sense of how the number of iterations depends on the required precision, the test from Subsection \ref{subsec:Results1} was partially repeated using the $128$-bit precision computation. The results are collected in Table \ref{Table:4}. Note that $2^{-128}\approx 2.94\cdot 10^{-39}$ and that in this case the values in the table are not the maximum absolute errors because the comparison values are computed with the same precision.
\setlength{\tabcolsep}{3pt}
\begin{table}[h]
{\footnotesize
\caption{$128$-bit precision computation of $T(h,r)=\Phi_\varrho^2(h,0)-\frac{1}{2}\Phi(h)$ on the test grid\label{Table:4}} 
\begin{center}
\begin{tabular}{lrcrcrr}
\toprule
\multicolumn{1}{l}{\bfseries }&\multicolumn{1}{c}{\bfseries Tetrachoric}&\multicolumn{1}{c}{\bfseries }&\multicolumn{1}{c}{\bfseries Vasicek}&\multicolumn{1}{c}{\bfseries }&\multicolumn{2}{c}{\bfseries Novel series}\tabularnewline
\cmidrule(lr){2-2} \cmidrule(lr){4-4} \cmidrule(lr){6-7}
\multicolumn{1}{l}{}&\multicolumn{1}{c}{(accelerated)}&\multicolumn{1}{c}{}&\multicolumn{1}{c}{(accelerated)}&\multicolumn{1}{c}{}&\multicolumn{1}{c}{(atanExt = no)}&\multicolumn{1}{c}{(atanExt = yes)}\tabularnewline
\midrule
{\bfseries $h$ is a vector}&&&&&&\tabularnewline
~~Max abs. error\phantom{ab}&benchmark&&6.61e-39&&5.51e-39&3.88e-39\tabularnewline
~~Average iter. 1\phantom{ab}&82.8&&75.5&&105.7&55.0\tabularnewline
\midrule
{\bfseries $r$ is a vector}&&&&&&\tabularnewline
~~Max abs. error \phantom{ba}&0.00e+00&&6.61e-39&&5.51e-39&3.88e-39\tabularnewline
~~Average iter. 1 \phantom{ba}&132.9&&120.9&&149.2&72.5\tabularnewline
\midrule
{\bfseries for both}&&&&&&\tabularnewline
~~Average iter. 2 \phantom{ba}&63.2&&57.7&&73.4&40.3\tabularnewline
~~Maximum iter. \phantom{ba}&191&&144&&199&119\tabularnewline
\bottomrule
\end{tabular}
\end{center}}
\end{table}
\setlength{\tabcolsep}{6pt} % default
%\medskip

From Tables \ref{Table:1} and \ref{Table:4} can be concluded that there is no significant difference in accuracy between the compared series if the accelerated series are considered, but one of the new series converges significantly faster than the others. It can also be concluded that the Vasicek's series with $\overline{\varrho}$ instead of $\varrho$ is a reasonable alternative to the tetrachoric series \eqref{TetrachA} also for $\varrho^2<\frac{1}{2}$ and remains a reasonable alternative for $\varrho^2>\frac{1}{2}$ if compared to the transformed tetrachoric series \eqref{TetrachB}.
%\medskip

The number of iterations for the computation with the new series with double and $128$-bit precision can be compared in Figure \ref{Figure:1}. Only the first quadrant is presented because others are its mirror images.
\begin{figure}[h]
{\centering
\includegraphics{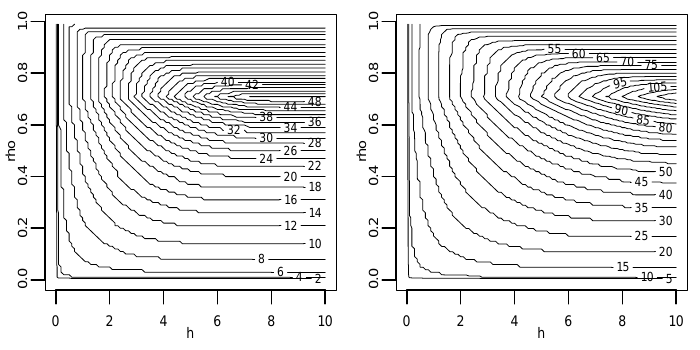}
\caption{$T(h,r)$ computation with the new series (atanExt = yes) with double (left) and $128$-bit (right) precision -- number of iterations\label{Figure:1}}}
\end{figure}
%\medskip

We also provide examples with $r=1$ and $r=-1$ for which reference values can be computed alternatively. $A=\Phi_{\sqrt{2}/2}^2(2.1,0)$ and $B=\Phi_{-\sqrt{2}/2}^2(2.1,0)$ were computed by \eqref{Owen2} and \eqref{Owen3}, and by the right side of \eqref{Phirh0b} by the \code{pnorm()} function from the package \pkg{Rmpfr} as a benchmark. In all cases, the latter was computed by double the precision expressed in bits and used for the former. Both reference values were used to compare how quickly the compared series converge when computing with more than $128$-bit precision. In Tables \ref{Table:5} and \ref{Table:6}, for each of them the absolute error and the number of iterations, which is the same for both computations, is presented. Since $\abs{r}=1$, there is no difference between the original and accelerated tetrachoric series, and the same applies to Vasicek's series.
\begin{table}[h]
{\footnotesize
\caption{A special case of high-precision computing -- absolute error comparison\label{Table:5}} 
\begin{center}
\begin{tabular}{rrcrrrcrrr}
\toprule
\multicolumn{1}{c}{}&\multicolumn{1}{c}{}&\multicolumn{1}{c}{}&\multicolumn{3}{c}{\bfseries Tetrachoric}&\multicolumn{1}{c}{}&\multicolumn{3}{c}{\bfseries Vasicek}\tabularnewline
\cmidrule(lr){4-6} \cmidrule(lr){8-10}
\multicolumn{1}{r}{Bits}&\multicolumn{1}{r}{Precision}&\multicolumn{1}{c}{}& \multicolumn{1}{c}{\phantom{1}A}&\multicolumn{1}{c}{\phantom{1}B}& \multicolumn{1}{r}{\phantom{1}n}&\multicolumn{1}{c}{}&\multicolumn{1}{c}{\phantom{2}A}& \multicolumn{1}{c}{\phantom{2}B}&\multicolumn{1}{r}{\phantom{2}n}\tabularnewline
\midrule
53&1.11e-16&&6.98e-17&8.65e-17&46&&6.98e-17&3.10e-17&47\tabularnewline
64&5.42e-20&&6.89e-21&2.26e-20&57&&6.89e-21&2.26e-20&57\tabularnewline
128&2.94e-39&&5.47e-40&5.89e-41&119&&5.47e-40&5.89e-41&120\tabularnewline
256&8.64e-78&&6.29e-78&3.33e-78&246&&6.29e-78&3.33e-78&246\tabularnewline
512&7.46e-155&&3.65e-156&1.37e-155&500&&4.09e-155&1.37e-155&501\tabularnewline
1024&5.56e-309&&5.04e-309&2.41e-309&1011&&2.25e-309&5.19e-309&1011\tabularnewline
\bottomrule
\end{tabular}
\end{center}}
\end{table}
\begin{table}[h]
{\footnotesize
\caption{A special case of high-precision computing -- absolute error comparison\label{Table:6}} 
\begin{center}
\begin{tabular}{rrcrrrcrrr}
\toprule
\multicolumn{1}{c}{}&\multicolumn{1}{c}{}&\multicolumn{1}{c}{}&\multicolumn{3}{c}{\bfseries Novel series \textnormal{(atanExt = no)}}&\multicolumn{1}{c}{}&\multicolumn{3}{c}{\bfseries Novel series \textnormal{(atanExt = yes)}}\tabularnewline
\cmidrule(lr){4-6} \cmidrule(lr){8-10}
\multicolumn{1}{r}{Bits}&\multicolumn{1}{r}{Precision}&\multicolumn{1}{c}{}& \multicolumn{1}{c}{\phantom{3}A}&\multicolumn{1}{c}{\phantom{3}B}& \multicolumn{1}{r}{\phantom{3}n}&\multicolumn{1}{c}{}&\multicolumn{1}{c}{\phantom{4}A}& \multicolumn{1}{c}{\phantom{4}B}&\multicolumn{1}{r}{\phantom{4}n}\tabularnewline
\midrule
53&1.11e-16&&6.98e-17&3.10e-17&54&&6.98e-17&3.10e-17&22\tabularnewline
64&5.42e-20&&3.40e-20&4.54e-21&65&&6.89e-21&2.26e-20&25\tabularnewline
128&2.94e-39&&5.47e-40&5.89e-41&128&&9.23e-40&1.41e-39&41\tabularnewline
256&8.64e-78&&6.29e-78&3.33e-78&256&&6.29e-78&3.33e-78&69\tabularnewline
512&7.46e-155&&4.09e-155&1.37e-155&511&&4.09e-155&1.37e-155&116\tabularnewline
1024&5.56e-309&&2.25e-309&5.19e-309&1023&&5.04e-309&2.41e-309&199\tabularnewline
\bottomrule
\end{tabular}
\end{center}}
\end{table}
Time-consuming high precision testing was less intensive than double precision one. Since there is no competing package on CRAN ready for high-precision computation of $\Phi_\varrho^2(x,y)$, it could not be supplemented by comparison computations with competing packages.

\section{Conclusion}

If the generally applicable Theorem \ref{Theorem1} is not considered, Theorems \ref{Theorem2} and \ref{Theorem3}, and Corollary \ref{Corollary2} are interesting mainly theoretically, because Theorem \ref{Theorem4}, together with \eqref{Owen2} and \eqref{Owen3}, replaces and supplements them in practice. The series \eqref{Owen6a}--\eqref{Owen6d} enable fast and numerically stable computation, which is often more important than the speed of convergence. From Tables \ref{Table:1} and \ref{Table:4} can be concluded that the computation with \eqref{Owen6a} or \eqref{Owen6c} needs significantly fewer iterations than the computation with the accelerated tetrachoric and Vasicek's series, and from Tables \ref{Table:5} and \ref{Table:6} can be assumed that the advantage increases with increasing precision. For $53$-bit precision (double precision), it required a half, and for $1024$-bit precision only a fifth, of the iterations demanded by the competing series. However, the cost of calculating $\arctan r$ is not taken into account. In connection with this, let us just mention two more questions.
%\medskip

Recalling that $e^{-x}I_n(x)=P(n+1,x)$, the Taylor series of the arctangent function and \eqref{Owen1} imply
\begin{equation*}
   T(h,r)=\tfrac{\arctan r}{2\pi}-\tfrac{1}{2\pi}\sum_{k=0}^\infty\tfrac{(-1)^k r^{2k+1}}{2k+1}\;P(k+1,\tfrac{1}{2}h^2)=\tfrac{1}{2\pi}\sum_{k=0}^\infty\tfrac{(-1)^k r^{2k+1}}{2k+1}\;Q(k+1,\tfrac{1}{2}h^2),
\end{equation*}
which \citet[p.~241]{Fayed2014a} have already noticed. If the above series, \eqref{Eq:Theo2a} and \eqref{Eq:Theo3a} are viewed as functions of the variable $h$, they have a similar structure, but the coefficients, depending on $r$, belong to different arctangent series. Whether the similarity could be explained by a similar Euler transform as the one which transforms the Taylor arctangent series to the Euler's arctangent series seems interesting question, but was not deeply explored.
%\medskip

In Recursion \ref{Recurs1}, the external calculation of the arctangent function is avoided, assuming that the Euler's series \eqref{Euler1} is used for it, and the expectation that there would be no difference in the number of iterations for Recursions \ref{Recurs1} and \ref{Recurs2} if those for calculating the arctangent were also taken into account. However, the use of the external arctangent calculation allows the use of already known and possible future faster converging series. Such is the case with
\begin{equation}\label{Eq:arctan}
   \arctan r=\tfrac{r}{\sqrt{1+r^2}}\sum_{k=0}^\infty\tfrac{(2k-1)!!}{(2k)!!\, (2k+1)}\left(\tfrac{r^2}{1+r^2}\right)^k\quad(r\in\mathbb{R}),
\end{equation}
which is based on the Taylor series of $\arcsin r$ and $\arctan r=\arcsin\frac{r}{\sqrt{1+r^2}}$. Indeed, the number of iterations is not significantly smaller if $\abs{r}\le 1$ and the accuracy obtained by double precision computation is sufficient, and even the square root must be calculated, but it could be significantly smaller in high-precision computation. The Euler's series \eqref{Euler1} is included in \citep{Abramowitz1972} and \citep{Olver2010}, but \eqref{Eq:arctan} is not, even though it converges faster. It can be found in \citep[p.~61, 1.644 1.]{Gradshteyn2015} as
\begin{equation*}
   \arctan r=\tfrac{r}{\sqrt{1+r^2}}\sum_{k=0}^\infty\tfrac{(2k)!}{2^{2k}(k!)^2(2k+1)} \left(\tfrac{r^2}{1+r^2}\right)^k
\end{equation*}
with a reference to the 1922 source. Due to equations $(2k-1)!!=\frac{(2k)!}{2^k k!}$ and $(2k)!!=2^k k!$, the coefficients are the same as those in \eqref{Eq:arctan}. Whether we could also find a corresponding series for the Owen's $T$ function, which would converge faster than \eqref{Owen6b}, remains an open challenge. 

% \input{../AMS/cla2bib}
% \bibliography{cla2bib}

% Bibliografija je bila pripravljena na imeniku AMS.
% Pri številki izdaje je tu ročno dodano th.
% .bbl file included

\end{document}